\newcommand{\R}{\mathbb R}
\newcommand{\N}{\mathbb N}
\newcommand{\V}{\mathbb V}
\newcommand{\Uball}{{\mathbb B}}
\newcommand{\Usfer}{{\mathbb S}}
\newcommand{\dom}{{\rm dom}\, }
\newcommand{\nullv}{\mathbf{0}}
\newcommand{\card}{{\rm card}\, }
\newcommand{\krn}{{\rm ker}\, }
\newcommand{\clco}{\overline{\rm co}\, }
\newcommand{\conv}{{\rm co}\, }
\newcommand{\bd}{{\rm bd}\, }
\newcommand{\inte}{{\rm int}\, }
\newcommand{\Lin}{{\bf M}_{m\times n}(\R)}
\newcommand{\SFP}{{\rm SFP}}
\newcommand{\RSFP}{{\rm RSFP}}
\newcommand{\SCQ}{{\rm SCQ}}
\newcommand{\Amap}{\mathbf{A}}
\newcommand{\SVMAO}{\mathcal{A}_\mathcal{U}}
\newcommand{\Solv}{{\rm S}_{C,\mathcal{U},Q}}
\newcommand{\Bpoly}{\mathbf{B}}
\newcommand{\pmfun}{{\nu}_{\mathcal{A}_\mathcal{U}}}     
\newcommand{\val}{{\rm v}}
\newcommand{\ebcon}{\tau}
\newcommand{\ebconglo}{{\tau}_{\mathcal{U}}}
\newcommand{\dcone}[1]{{#1}^{{}^\ominus}}
\newcommand{\sur}[1]{{\rm cov}(#1)}
\newcommand{\ball}[2]{{\rm B}\left[#1, #2\right]}
\newcommand{\dist}[2]{{\rm dist}\left(#1;#2\right)}
\newcommand{\exc}[2]{{\rm exc}(#1;#2)}
\newcommand{\supp}[2]{\varsigma\left(#1;#2\right)}
\newcommand{\Ncone}[2]{{\rm N}(#1;#2)}
\newcommand{\haus}[2]{{\rm haus}(#1;#2)}
\newcommand{\inc}[3]{{\rm inc}(#1;#2;#3)}   
\begin{document}

\title{On a robust approach to ``split" feasibility problems: solvability
and global error bound conditions
}

\titlerunning{Solvability and error bound conditions for robust
split feasibility problems}        

\author{Amos Uderzo}

\authorrunning{A. Uderzo} 

\institute{A. Uderzo \at
              Department of Mathematics and Applications \\
              Università degli studi di Milano-Bicocca  \\
              \email{amos.uderzo@unimib.it}           
}

\date{Received: date / Accepted: date}

\maketitle

\noindent Date: \today

\begin{abstract}
In the present paper, a robust approach to a special class of
convex feasibility problems is considered. By techniques of convex and
variational analysis, conditions for the existence of robust feasible solutions
and related error bounds are investigated. This is done by reformulating
the robust counterpart of a split feasibility problem as a set-valued
inclusion, a problem for which one can take profit from the solvability
and stability theory that has been recently developed.
As a result, a sufficient condition for solution existence and error bounds
is established in terms of problem data and discussed through several
examples. A specific focus is devoted to error bound conditions in the case
of the robust counterpart of polyhedral split feasibility problems.

\subclass{MSC 49J53, 65K10, 90C25}
\end{abstract}


\section{Introduction}
\label{Sect:1}

In the present paper, after \cite{CenElf94} by ``split"
feasibility problem the following mathematical question is meant: given a matrix $A\in\Lin$
and two closed convex sets $C\subseteq\R^n$ and $Q\subseteq\R^m$,
$$
   \hbox{find $x\in C\quad$ such that}
   \quad Ax\in Q. \leqno (\SFP)
$$
This kind of problems was firstly considered in \cite[Section 6.1]{CenElf94}
in the context of iterative projection methods for solving more general convex feasibility
problems, the latter meaning to find a point in the nonempty intersection of finitely many
closed convex sets (see, for instance, \cite{BauBor93,BauBor96}).
As it was recognized soon, the format $(\SFP)$ is relevant
from the viewpoint of both theory and applications. It allows to
formalize classic constraint systems in mathematical programming
and in affine complementarity problems.
Besides, $(\SFP)$ emerges as a model
of interest in applications to engineering, in particular to the area of
signal processing, such as phase retrieval, image reconstruction and computed tomography
issues (see, for instance, \cite{Hurt89}) and to inverse problems
(see \cite{CeElKoBo05}).
As a consequence, a good deal of investigations has been carried out
on various topics related to $(\SFP)$ (see, among the others,
\cite{Byrn02,HuXuYe25,HuXuYe25b,QuLiu18,Xu10}).
In the present paper, by elaborating on the format $(\SFP)$, the
case of problems affected by data uncertainty is addressed.
In handling real-world models leading to $(\SFP)$, there are
several reasons that motivate such an analysis perspective.
To summarize, they include the impossibility to neglect effects
of errors, which unavoidably enter any process of measuring/estimating
parameters that describe a real-world system. Another type of error
also occurring stems from the impossibility to implement a solution with
the same precision degree to which it is computed, making nominal
solution inadequate as a matter of fact to any practical purpose.
In the need to address the phenomenon of data uncertainty, several
methodologies have been developed in optimization.
The investigations exposed in this paper follow the approach proposed
in \cite{BeGhNe09,BenNem98,KouYu97,Soys73}, which since the late
90s is attracting the interest of many researchers
(see, among the others, \cite{BeBoNe06,BenNem02,ElOuLe98,KhTaZa15}
and references therein).
Roughly speaking, the philosophy behind this approach consists in
regarding constraint systems as ``hard" (violations of constraints
can not be admitted at all by the decision maker), while uncertainty is
handled without making any reference to its stochastic nature
(the probability distribution of random data could be unknown or
only partially known, the constraint fulfilment in probability
could not suit the aforementioned ``hardness").
According to such an attitude, it must be considered as a feasible solution
only those solutions, which are able to satisfy the constraint
system, whatever the realizations of uncertain parameters are.
This leads to a sort of  ``immunization against uncertainty", which
appears to be suitable in decision-making environments, where a
conservative attitude towards the constraint fulfilment is
required (interested readers can find some of them discussed in detail in
\cite{BenNem98,BeGhNe09}).

In what follows, a set $\Omega$ is assumed to collect
all possible different scenarios (or states of the world),
whose multiplicity causes the uncertainty phenomenon affecting the problem
data. With reference to split feasibility problems, the present
analysis focuses on the uncertainty of the entries of $A$, disregarding
at present the possible uncertainty related to $C$ and $Q$. The realization of the
scenario $\omega$ in terms of entries of $A$ is denoted by $A_\omega$.
The uncertainty set collecting all possible values of the matrix data
consequently becomes $\mathcal{U}=\{A_\omega\ :\ \omega\in\Omega\}$. Thus, the
resulting robust counterpart to $(\SFP)$ amounts to
$$
    \hbox{find $x\in C\quad$ such that}\quad
    A_\omega x\in Q,\quad\forall
   \omega\in\Omega. \leqno (\RSFP)
$$
The aim of the present paper is to conduct a study of solvability and
error bounds for $(\RSFP)$ relying on basic tools from convex and variational
analysis. In order to perform a related technique of analysis,
let us introduce the set-valued map $\SVMAO:\R^n\rightrightarrows\R^m$,
defined by
\begin{equation}   \label{eq:defSVMAO}
  \SVMAO(x)=  \{y\in\R^m\ :\ y=A_\omega x,\ A_\omega\in\mathcal{U}\}.
\end{equation}
Through the above map, $(\RSFP)$ can be reformulated as the following
set-valued inclusion problem
\begin{equation}    \label{in:SVIref}
    \hbox{find $x\in C\quad$ such that}\quad
    \SVMAO(x)\subseteq Q.
\end{equation}
This kind of generalized (set-inclusive) equations was addressed, independently of the
robust optimization approach, already in \cite{Uder19}, but its
appearance in optimization contexts can be traced back at least to \cite{Soys73}.
In the above setting, the set of all solutions of $(\RSFP)$
(henceforth {\it robust feasible solutions}) is given by
\begin{equation}    \label{eq:Solvdef}
  \Solv=\{x\in C \ :\ \SVMAO(x)\subseteq Q\}=C\cap \SVMAO^{+1}(Q)
  =C\cap\bigcap_{\omega\in\Omega}A_\omega^{-1}(Q),
\end{equation}
where the notation $\mathcal{F}^{+1}(S)$ indicates the core of $S$
through $\mathcal{F}$, i.e. one the two
possible ways, in which the inverse image through a set-valued map $\mathcal{F}$
of a given subset $S$ of the range space is meant.

By global error bound for $(\RSFP)$ or (\ref{in:SVIref}), any
situation is meant in which there exists $\tau>0$ such that
$$
  \dist{x}{\Solv}\le\tau \rho(x),\quad\forall x\in\R^n,
$$
where $\rho$ denotes a residual function, i.e.
any function $\rho:\R^n\longrightarrow [0,+\infty]$ working in
such a way that
$$
  x\in\Solv \qquad\hbox{ iff }\qquad
  \rho(x)=0,
$$
whose values can be computed by problem data.
The study of error bounds for various kinds of problems
(e.g. scalar inequality systems and cone constraints,
variational inequalities and complementarity problems, equilibrium problems)
is a topic widely explored in the last decades, because of
connections with profound themes in variational analysis
(regularity phenomena, implicit functions, and
Lipschitz behaviours of maps) and the broad range of applications
in different areas, such as penalty function methods in mathematical
programming, implementation and convergence analysis of numerical
methods for solving optimization problems. As a result, the existing
literature on the subject is vast (in the impossibility to provide
an exhaustive account, \cite{FaHeKrOu10,FacPan03,Pang97,WuYe02}
may be indicated as a source for a bibliographical search).
In spite of this, to the best of his knowledge, the author has no information
about any attempt to study error bounds specific for $(\RSFP)$
in the recorded literature.
Local error bounds for nonlinear split feasibility problems have
been investigated in \cite{GaShWa23}.
A stability/sensitivity analysis of split feasibility problems has been recently
conducted by variational analysis techniques, in various settings,
in \cite{HuXuYe25,HuXuYe25b}. Nevertheless,
as pointed out in \cite{BeGhNe09}, the methodology for handling data perturbation
proposed by stability/sensitivity analysis ``is aimed at completely different questions".
Indeed, instead of being concerned with the effects of the uncertainty set as a whole
on the very feasible solution concept (robust feasibility), the analysis in that framework deals
with reactions to (often small) perturbations near a reference value of
the classic solution set. The reader should notice that this essential difference
reflects on the different use of set-valued maps arising in the two methodologies.

The contents of the paper are arranged according to the following scheme.
In Section \ref{Sect:2} various preliminary elements (discussions about
the standing assumptions, needed tools from convex and variational analysis,
the main technique of investigation) are presented.
Section \ref{Sect:3} exposes the main result of the paper. In the same section,
the reader will find several examples and comments aimed at illustrating
some features of such a result.
In Section \ref{Sect:4} a specific focus on error bound conditions for
the robust counterparts of split
feasibility problems in the polyhedral case is discussed.
A section for conclusions to be drawn completes the contents.

The basic notations used throughout the paper are standard.
$\R$ denotes the field of real numbers, while $\N$ denotes the subset of
natural numbers. $\R^n$ stands for the space of
vectors with $n$ real components, $\R^n_+$ the nonnegative orthant
in $\R^n$, with $\nullv$ denoting the null vector.
In any Euclidean space, $\|\cdot\|$ stands for the Euclidean norm.
Given a function $\varphi:\R^n\longrightarrow\R\cup\{\pm\infty\}$,
$\dom\varphi=\varphi^{-1}(\R)$ denotes its domain and, if $\ell\in\R$,
$[\varphi\le\ell]=\{x\in\R^n\ :\ \varphi(x)\le\ell\}$ denotes its
sublevel set, whereas $[\varphi>\ell]=\{x\in\R^n:\ \varphi(x)>\ell\}$
denotes its strict superlevel set. If $S$ is a subset of $\R^n$,
$\dist{x}{S}=\inf_{z\in S}\|z-x\|$ denotes that distance of a point $x\in\R^n$ from
$S$, with the convention that $\dist{x}{\varnothing}=+\infty$.
Consistently, if $r\ge 0$, $\ball{S}{r}=[\dist{\cdot}{S}\le r]$
indicates the $r$-enlargement of the set $S$, with radius $r$. In particular, if
$S=\{x\}$, $\ball{x}{r}$ denotes the closed ball with center $x$
and radius $r$.
In this context, $\ball{\nullv}{1}$ and $\bd\ball{\nullv}{1}$
will be simply indicated by $\Uball$ and $\Usfer$, respectively.
The symbol $\bd S$, $\inte S$ and $\clco S$ indicate the boundary, the topological interior
and the convex closure of $S$, respectively. Whenever $S$ is a vector
subspace of an Euclidean space, by $S^\perp$ its orthogonal complement
is denoted.
Given a pair of subsets, say $A$ and $S$, the excess of $A$ over $S$
is indicated by $\exc{A}{S}=\sup_{a\in A}\dist{a}{S}$, whereas their
Hausdorff-Pompeiu distance by $\haus{A}{S}=\max\{\exc{A}{S},\, \exc{S}{A}\}$.
Given a set-valued map $\mathcal{F}:\R^n\rightrightarrows\R^m$, $\dom\mathcal{F}=
\{x\in\R^n\ :\ \mathcal{F}(x)\ne\varnothing\}$ denotes the effective domain
of $\mathcal{F}$.
The space $\Lin$ will be equipped with the operator norm, indicated by $\|\cdot\|_{\bf M}$.
Given $A\in\Lin$, $A^\top$ stands for the transposed matrix of $A$
and $\ker A$ stands for its kernel.

The acronyms l.s.c. and p.h., standing for lower semicontinuous
and positively homogeneous, respectively, will be also used.
The meaning of further symbols employed in subsequent sections
will be explained contextually to their introduction.


\section{Preliminaries}
\label{Sect:2}

\subsection{\bf Standing assumptions on the problem data}
In this section, the assumptions standing throughout the
paper are formulated and discussed.

$\bullet$ Assumptions on the (certain) data $C$ and $Q$:
\begin{itemize}
  \item[$(a_0)$] $\varnothing\ne C\subseteq\R^n$ and $\varnothing\ne Q\subseteq\R^m$
  are both closed and convex sets.
\end{itemize}

$\bullet$ Assumptions on uncertain data:
In the main result of the paper, for enabling to make use of certain convex
analysis constructions, $\Omega$ will be supposed to be a separated
compact topological space. Given an uncertainty set
$\mathcal{U}=\{A_\omega\ :\ \omega\in\Omega\}$, it is convenient
to introduce the map $\Amap:\Omega\longrightarrow (\R^m)^{\R^n}$ by setting
$$
  \Amap(\omega)=A_\omega.
$$
The following assumptions are made on $\mathcal{U}$ via the map $\Amap$:
\begin{itemize}
  \item[$(a_1)$] $\Amap(\omega)\in\Lin$, for every $\omega\in\Omega$;
  \\
  \item[$(a_2)$] the set $\Amap(\Omega)=\mathcal{U}$ is a compact and
  convex subset of $\Lin$.
\end{itemize}

Assumption $(a_1)$ means that, even in the presence of uncertainty,
the robust counterpart of the split feasibility problem maintains a linear structure
with respect to the decision variable $x$.

Assumption $(a_2)$ indicates that no specific structure is imposed
on the dependence of $A_\omega$ on $\omega$ (the decision maker knows
only that the scenario $\omega\in\Omega$ yields the ``revealed matrix" $A_\omega$,
not how it does), apart from the aforementioned properties of $\mathcal{U}$.
In other terms, unlike $\mathcal{U}$, the map $\Amap$ may be not among
the problem data. Nonetheless, in dealing with concrete instances
of $(\RSFP)$ and examples to be worked out, the explicit form of $\Amap$ will be given.

\begin{remark}
Notice that assumptions $(a_1)$ and $(a_2)$ are satisfied whenever $\Omega$
is a compact convex subset of a topological vector space $\V$ over the
field $\R$ and the map $\Amap:\V\longrightarrow\Lin$ is linear (or affine) and continuous.
A specific example of such a circumstance is discussed below in details.
\end{remark}

\begin{example}[Birkhoff polytope]    \label{ex:Bpoly}
Let $n=m=3$ and let $\V=\R^4$ be equipped with its usual Euclidean space
structure. Consider the map $\Amap:\R^4\longrightarrow{\bf M}_{3\times 3}(\R)$
defined by
$$
  \Amap(\omega)=\left(\begin{array}{ccc}
                        \omega_1 & \omega_2 & 1-\omega_1-\omega_2 \\
                        \omega_3 & \omega_4 & 1-\omega_3-\omega_4 \\
                        1-\omega_1-\omega_3\quad & 1-\omega_2-\omega_4\quad & \omega_1+\omega_2+\omega_3+\omega_4-1
                      \end{array}    \right),
$$
where $\omega=(\omega_1,\omega_2,\omega_3,\omega_4)\in\R^4$.
In this setting, let us define $\Omega$ to be the following compact polyhedron
(polytope)
$$
  \Omega=\left\{\omega=(\omega_1,\omega_2,\omega_3,\omega_4)\in\R^4_+\ :\
  \begin{array}{l}
                        \omega_1 + \omega_2 \le 1 \\
                        \omega_1 + \omega_3 \le 1 \\
                        \omega_2 + \omega_4 \le 1 \\
                        \omega_3 + \omega_4 \le 1 \\
                        \omega_1+\omega_2+\omega_3+\omega_4\ge 1
                      \end{array}  \right\}\subseteq [0,1]^4.
$$
The map $\Amap$ is clearly affine and therefore $\Amap(\Omega)$ is
a compact convex subset (actually a polytope) of ${\bf M}_{3\times 3}(\R)$.
It is possible to see, indeed, that by construction of $\Amap$
and $\Omega$, the set $\Amap(\Omega)$ coincides with Birkhoff polytope
\begin{eqnarray*}
  \Bpoly_3=\{A=(a_{i,j})_{i,j=1,2,3}\in{\bf M}_{3\times 3}(\R)\ :\
                       & & \sum_{i=1}^{3}a_{i,j} =1,\quad\forall j=1,2,3, \\
                        & & \sum_{j=1}^{3}a_{i,j} =1,\quad\forall i=1,2,3, \\
                        & & a_{i,j}\ge 0,\quad\forall i,j=1,2,3 \}
\end{eqnarray*}
consisting of all the $3$-dimensional doubly stochastic matrices
(see, for instance, \cite[Chapter II.5]{Barv02}). Clearly, the above
example can be generalized to an arbitrary dimension $n\in\N\backslash\{0,1\}$, by a proper
choice of the polytope $\Omega$ in the space $\R^{(n-1)^2}$, in such a
way that $\Amap(\Omega)=\Bpoly_n$.
\end{example}

Of course, assumptions $(a_1)$ and $(a_2)$ may happen to be satisfied even though
$\Omega$ is not convex neither compact, with $\Amap:\Omega\longrightarrow\Lin$
being not affine, as illustrated in the next example.

\begin{example}
Let $n=m=1$, let $\Omega=(-\infty,-1]\cup\{0\}\cup [1,+\infty)\subseteq\R$ and
let $\Amap:\R\longrightarrow{\bf M}_{1\times 1}(\R)\cong\R$ be defined by
$$
   \Amap(\omega)=\frac{\omega}{\omega^2+1}.
$$
In such an event, one has $\Amap(\Omega)=[-1/2,1/2]$, which is compact
and convex.
\end{example}


\subsection{\bf Convex analysis tools}
In this section, some basic concepts from convex analysis are recalled
in view of their subsequent employment. These mainly deal with the well-known
calculus for convex sets and functions.
Given a nonempty convex set $C\subseteq\R^n$ and $\bar x\in C$, denote by
$$
   \Ncone{\bar x}{C}=\{v\in\R^n\ :\ \langle v,x-\bar x\rangle\le 0,
   \quad\forall x\in C\}
$$
the normal cone to $C$ at $\bar x$. By the very definition, it is clear that
if $C$ is in particular a closed cone, then one has $\Ncone{\bar x}{C}\subseteq
\Ncone{\nullv}{C}=\dcone{C}$, where $\dcone{C}$ stands for the negative
dual cone to $C$.

Given a convex function $\varphi:\R^n\longrightarrow\R\cup\{+\infty\}$
and $\bar x\in\dom\varphi$, the symbol
$$
   \partial\varphi(\bar x)=\{v\in\R^n\ :\ \langle v,x-\bar x\rangle\le\varphi(x)
   -\varphi(\bar x),\quad\forall x\in\R^n\}
$$
denotes the Moreau-Rockafellar subdifferential of $\varphi$ at $\bar x$.
In particular, for a convex function $\varphi:\R^n\longrightarrow\R$
it is $\dom\varphi=\R^n$,
so, since a convex function defined on a
finite-dimensional Euclidean space is known to be continuous
on the interior of its domain, then $\partial\varphi(x)\ne\varnothing$
for every $x\in\R^n$ (see \cite[Theorem 3.39(b)]{MorNam22}),
namely $\dom\partial\varphi=\R^n$.
The following calculus rules will be employed in the sequel:

\begin{itemize}

\item[$\bullet$] sum rule: if $\varphi:\R^n\longrightarrow\R$ and
$\vartheta:\R^n\longrightarrow\R$ are convex functions and
$\bar x\in\R^n$, then the following equality holds
\begin{equation}    \label{eq:subdsumr}
   \partial\left(\varphi+\vartheta\right)(\bar x)=
   \partial\varphi(\bar x)+\partial\vartheta(\bar x)
\end{equation}
(see \cite[Theorem 3.48]{MorNam22}).

\item[$\bullet$] chain rule: if $\Lambda\in\Lin$, $\varphi:\R^m\longrightarrow\R$
is a convex function, and $\bar x\in\R^n$ , then it holds
\begin{equation}   \label{eq:subdchainr}
  \partial(\varphi\circ\Lambda)(\bar x)=\Lambda^\top\left(\partial
  \varphi(\Lambda\bar x)\right)
\end{equation}
(see \cite[Theorem 3.55]{MorNam22}).

\item[$\bullet$] Ioffe-Tikhomirov rule: let $(\Xi,\tau)$ be a separated
compact topological space and let $\varphi_\xi:\R^n\longrightarrow\R\cup\{+\infty\}$
be a convex function for every $\xi\in\Xi$. Define $\varphi:\longrightarrow\R
\cup\{+\infty\}$ as being
$$
  \varphi(x)=\sup_{\xi\in\Xi}\varphi_{\xi}(x).
$$
If the function $\varphi_{\xi}(x):\Xi\longrightarrow\R\cup\{+\infty\}$, given by
$\xi\mapsto \varphi_\xi(x)$, is u.s.c. on $\Xi$, $\bar x\in\dom f$ and $\varphi_\xi$
is continuous at $\bar x$ for every $\xi\in\Xi$, then it holds
\begin{equation}    \label{eq:subdITr}
  \partial\varphi(\bar x)=\clco\left(\bigcup_{\xi\in\Xi_{\bar x}}
  \partial\varphi_\xi(\bar x)\right),
\end{equation}
where $\Xi_{\bar x}=\{\xi\in\Xi\ :\ \varphi_\xi(\bar x)=\varphi(\bar x)\}$
indicates the clean-up index set (see \cite[Theorem 2.4.18]{Zali02}).
\end{itemize}

Let $C\subseteq\R^n$ be a nonempty, closed, convex set and $\bar x
\in\R^n$.  In this case, the function $x\mapsto\dist{x}{C}$ turns out to be
convex, with $\dom\dist{\cdot}{C}=\R^n$. Since this function is going to
play a certain role in the analysis of the topic under study, it is convenient
to recall the form taken by its subdifferential, namely
\begin{equation}   \label{eq:subddist}
  \partial\dist{\cdot}{C}(\bar x)=\left\{
  \begin{array}{ll}
    \Ncone{\bar x}{C}\cap\Uball & \quad\hbox{ if } \bar x\in C, \\
    \\
    \left\{\displaystyle{\bar x-\Pi_C(\bar x)\over\dist{\bar x}{C}}\right\}
    & \quad\hbox{ if } \bar x\not\in C,
  \end{array}\right.
\end{equation}
where $\Pi_C:\R^n\longrightarrow C$ denotes the standard Euclidean projector
onto $C$, i.e. $\Pi_C(x)$ stands for the unique element of $C$ such that
$\|x-\Pi_C(x)\|=\dist{x}{C}$ (see \cite[Corollary 3.79]{MorNam22}).
In both the cases it is $\partial\dist{\cdot}{C}(\bar x)\subseteq
\Ncone{\bar x}{C}\cap\Uball$.


\subsection{\bf A subdifferential condition for solvability and error bounds
for convex inequalities}
The basic technique employed to ensure solution existence for $(\RSFP)$
and, at the same time, to establish error bounds relies on the following
proposition, whose proof is provided for the sake of completeness.

\begin{proposition}    \label{pro:sovebconvex}
Let $\varphi:\R^n\longrightarrow\R$ be a convex function. Assume that
$[\varphi>0]\ne\varnothing$ and that
\begin{equation}    \label{def:ebsolvconvex}
      \tau=\inf_{x\in [\varphi>0]}\dist{\nullv}{\partial\varphi(x)}
  =\inf\{\|v\|\ :\ v\in\partial\varphi(x),\quad x\in [\varphi>0]\}.
\end{equation}
Then, it is $[\varphi\le 0]\ne\varnothing$ and
$$
  \dist{x}{[\varphi\le 0]}\le {\varphi(x)\over\tau},
  \quad\forall x\in [\varphi>0].
$$
\end{proposition}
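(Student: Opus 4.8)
The plan is to deduce the statement from Ekeland's variational principle, applied to the truncated function $\varphi^{+}:=\max\{\varphi,0\}$. Since $\varphi$ is a finite-valued convex function on $\R^{n}$ it is continuous, hence $\varphi^{+}$ is continuous as well, convex, and bounded below by $0$. One may moreover assume $\tau>0$, as otherwise the asserted inequality holds trivially (with the convention $\varphi(x)/\tau=+\infty$ when $\varphi(x)>0$); the substance of the claim lies in the case $\tau>0$, which will also yield $[\varphi\le 0]\ne\varnothing$.

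Fix an arbitrary $x_{0}\in[\varphi>0]$ and set $\epsilon:=\varphi(x_{0})=\varphi^{+}(x_{0})>0$. Because $\varphi^{+}$ is lower semicontinuous, bounded below, and $\varphi^{+}(x_{0})\le\inf_{\R^{n}}\varphi^{+}+\epsilon$, Ekeland's principle provides, for each $\lambda>0$, a point $\bar x=\bar x_{\lambda}$ with $\|\bar x-x_{0}\|\le\lambda$ and such that $\bar x$ is the (global) minimizer of $x\mapsto\varphi^{+}(x)+(\epsilon/\lambda)\|x-\bar x\|$ on $\R^{n}$. Now I would distinguish two cases. If $\varphi(\bar x)\le 0$, then $\bar x\in[\varphi\le 0]$, so $[\varphi\le 0]\ne\varnothing$ and $\dist{x_{0}}{[\varphi\le 0]}\le\|\bar x-x_{0}\|\le\lambda$. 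If instead $\varphi(\bar x)>0$, then by continuity $\varphi$ agrees with $\varphi^{+}$ on a neighbourhood of $\bar x$, so $\bar x$ is a local minimizer --- hence, by convexity, a global minimizer --- of the convex function $x\mapsto\varphi(x)+(\epsilon/\lambda)\|x-\bar x\|$. By the optimality (Fermat) condition for convex functions, together with the sum rule \eqref{eq:subdsumr} and the elementary fact $\partial\big((\epsilon/\lambda)\|\cdot-\bar x\|\big)(\bar x)=(\epsilon/\lambda)\Uball$, this gives $\nullv\in\partial\varphi(\bar x)+(\epsilon/\lambda)\Uball$, i.e. $\dist{\nullv}{\partial\varphi(\bar x)}\le\epsilon/\lambda$. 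Since $\bar x\in[\varphi>0]$, the definition \eqref{def:ebsolvconvex} of $\tau$ forces $\tau\le\epsilon/\lambda=\varphi(x_{0})/\lambda$, i.e. $\lambda\le\varphi(x_{0})/\tau$.

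To finish, I would observe that for every $\lambda>\varphi(x_{0})/\tau$ the second case is impossible, so the first one occurs: $[\varphi\le 0]\ne\varnothing$ and $\dist{x_{0}}{[\varphi\le 0]}\le\lambda$. Letting $\lambda$ decrease to $\varphi(x_{0})/\tau$ yields $\dist{x_{0}}{[\varphi\le 0]}\le\varphi(x_{0})/\tau$, and since $x_{0}$ ranges over the nonempty set $[\varphi>0]$, both the nonemptiness of $[\varphi\le 0]$ and the error bound follow. I do not anticipate a genuine obstacle: the argument is the convex-analytic repackaging of the classical strong-slope error bound, and the only points requiring minor care are verifying the hypotheses of Ekeland's principle and the switch from $\varphi^{+}$ back to $\varphi$ on a neighbourhood of $\bar x$ (which is precisely where continuity and convexity of $\varphi$ enter).
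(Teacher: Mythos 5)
Your proof is correct, and it takes a more self-contained route than the paper. The paper disposes of this proposition in two lines by quoting the strong-slope error bound of Az\'e--Corvellec (\cite[Theorem 2.8]{AzeCor06}), valid for l.s.c.\ functions on complete metric spaces, together with the dual representation $\stsl{\varphi}(x)=\dist{\nullv}{\partial\varphi(x)}$ of the strong slope of a continuous convex function from \cite[Theorem 5(i)]{FaHeKrOu10}. You instead run the underlying mechanism directly: Ekeland's principle applied to $\max\{\varphi,0\}$, the observation that near a point where $\varphi>0$ the truncation is inactive, the local-to-global passage for the convex penalized function, and the Fermat rule with the sum rule \eqref{eq:subdsumr} and $\partial\bigl(c\|\cdot-\bar x\|\bigr)(\bar x)=c\,\Uball$ to get $\dist{\nullv}{\partial\varphi(\bar x)}\le\varphi(x_0)/\lambda$, which is exactly the dual slope estimate the paper imports. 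The two arguments are therefore the same in substance (the paper itself remarks that the cited results rest on Ekeland's principle), but yours is elementary and avoids introducing the strong slope, at the cost of being restricted to the convex finite-valued case, whereas the citation route inherits the much more general metric-space statement. One small point you handled sensibly and which is worth keeping explicit: when $\tau=0$ the nonemptiness of $[\varphi\le 0]$ can genuinely fail (take $\varphi(x)=e^{x}$ on $\R$), so the proposition must be read with $\tau>0$, which is precisely how it is invoked in Theorem \ref{thm:psufcond} with $\tau=\ebconglo>0$; your dichotomy argument (for $\lambda>\varphi(x_0)/\tau$ the ``critical'' case is excluded, forcing the Ekeland point into $[\varphi\le 0]$) then delivers both the nonemptiness and the bound correctly.
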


\begin{proof}
Notice that, since it is $\dom\varphi=\R^n$, $\varphi$
is continuous on $\R^n$, which is clearly a complete Banach space.
Thus, both the assertions in the thesis can be established
by applying \cite[Theorem 2.8]{AzeCor06} and by expressing
the strong slope of $\varphi$ in term of its subdifferential,
according to the dual representation established in
\cite[Theorem 5(i)]{FaHeKrOu10}.
\hfill $\square$
\end{proof}

The above and similar, even more general, results are standard and widely
employed tools in variational analysis, which ultimately rest upon
the Ekeland/Bishop-Phelps variational principle or its equivalent reformulations.


\subsection{\bf Functional characterization of $\Solv$}

Throughout this paper, following \cite{Uder19}, in order
to express error bounds for $(\RSFP)$ the residual function
$\pmfun:\R^n\longrightarrow [0,+\infty]$, defined by
\begin{equation}\label{eq:pmfundef}
  \pmfun(x)=\exc{\SVMAO(x)}{Q}+\dist{x}{C}
\end{equation}
will be employed.

The next proposition shows that $\pmfun$ can actually play the role
of a residual function for $(\RSFP)$.

\begin{proposition}      \label{pro:Solvchar}
Given a $(\RSFP)$, under the standing assumptions $(a_0)-(a_2)$, the
following equalities hold:
\begin{equation}    \label{eq:pfunchar}
  \Solv=\pmfun^{-1}(0)=[\pmfun\le 0].
\end{equation}
\end{proposition}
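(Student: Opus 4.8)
The plan is to show the two set equalities in \eqref{eq:pfunchar} essentially by unwinding the definition of $\pmfun$ and exploiting the nonnegativity of its two constituent terms. Since $\pmfun(x)=\exc{\SVMAO(x)}{Q}+\dist{x}{C}$ is a sum of two nonnegative quantities (recall $\exc{\SVMAO(x)}{Q}=\sup_{y\in\SVMAO(x)}\dist{y}{Q}\ge 0$ and $\dist{x}{C}\ge 0$), its value is $0$ if and only if both terms vanish, and $\pmfun(x)\le 0$ holds precisely when $\pmfun(x)=0$. This immediately gives $\pmfun^{-1}(0)=[\pmfun\le 0]$, so the real content is the identification of either of these with $\Solv$.

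First I would prove $\Solv\subseteq\pmfun^{-1}(0)$. Take $x\in\Solv$, so $x\in C$ and $\SVMAO(x)\subseteq Q$. From $x\in C$ one gets $\dist{x}{C}=0$ by assumption $(a_0)$ ($C$ closed). From $\SVMAO(x)\subseteq Q$ one gets $\dist{y}{Q}=0$ for every $y\in\SVMAO(x)$ (here one uses that $Q$ is closed, again by $(a_0)$), hence $\exc{\SVMAO(x)}{Q}=\sup_{y\in\SVMAO(x)}\dist{y}{Q}=0$; note the supremum over the empty set is $0$ by the usual convention, but in fact $\SVMAO(x)\ne\varnothing$ for every $x$ since $\mathcal{U}\ne\varnothing$ by $(a_1)$–$(a_2)$. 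Therefore $\pmfun(x)=0$.

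Conversely, I would prove $[\pmfun\le 0]\subseteq\Solv$. If $\pmfun(x)\le 0$, then $\pmfun(x)=0$ by the nonnegativity argument above, so $\dist{x}{C}=0$ and $\exc{\SVMAO(x)}{Q}=0$. Since $C$ is closed, $\dist{x}{C}=0$ forces $x\in C$. Since $\exc{\SVMAO(x)}{Q}=0$, we have $\dist{y}{Q}=0$ for every $y\in\SVMAO(x)$, and because $Q$ is closed this yields $y\in Q$ for every $y\in\SVMAO(x)$, i.e. $\SVMAO(x)\subseteq Q$. Hence $x\in C\cap\SVMAO^{+1}(Q)=\Solv$ by \eqref{eq:Solvdef}.

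The argument is entirely routine; I do not anticipate a genuine obstacle. The only points requiring a moment's care are: (i) making sure $\SVMAO(x)$ is nonempty so that the excess is the usual supremum rather than a vacuous one — this is guaranteed by $\mathcal{U}\ne\varnothing$, which follows from $(a_1)$–$(a_2)$ and $\Omega\ne\varnothing$; and (ii) using the closedness of both $C$ and $Q$ from $(a_0)$ at the steps where "distance zero" is upgraded to "membership". The convexity and compactness parts of the standing assumptions are not needed for this particular proposition.
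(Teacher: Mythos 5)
Your proposal is correct and follows essentially the same route as the paper's proof: decompose $\pmfun$ into its two nonnegative terms, observe they must both vanish on $[\pmfun\le 0]$, and use the closedness of $C$ and $Q$ from $(a_0)$ to pass from zero distance to membership, yielding $\Solv=[\pmfun\le 0]=\pmfun^{-1}(0)$. The extra remarks on the nonemptiness of $\SVMAO(x)$ are a harmless refinement the paper leaves implicit.
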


\begin{proof}
As for (\ref{eq:pfunchar}),
if $x\in\Solv$, then one has $\exc{\SVMAO(x)}{Q}=\dist{x}{C}=0$,
so it is true that $x\in [\pmfun\le 0]$.
Vice versa, if $x\in [\pmfun\le 0]$, as $\exc{\SVMAO(x)}{Q}$ and $\dist{x}{C}$
are both nonnegative, then it must
be $\exc{\SVMAO(x)}{Q}=\dist{x}{C}=0$. Since $C$ and $Q$ are closed sets,
these equalities imply $\SVMAO(x)\subseteq Q$ and $x\in\ C$, which means
$x\in\Solv$.
\hfill$\square$
\end{proof}

In order to enhance the result presented in Section \ref{Sect:3},
it is convenient to provide an equivalent reformulation of the
above residual function, which is based on the support function
associated to a given convex set, i.e. given $D\subseteq\R^m$,
the function $\supp{\cdot}{D}:\R^m\longrightarrow\R\cup\{\pm\infty\}$
defined by
$$
  \supp{v}{D}=\sup_{y\in D}\langle y,v\rangle.
$$

\begin{proposition}    \label{pro:dualreppmfun}
Let a problem $(\RSFP)$ satisfy the assumptions $(a_0)-(a_2)$.
Then, it holds
$$
  \pmfun(x)=\sup_{u\in\Uball}[\supp{u}{\SVMAO(x)}-\supp{u}{Q}]+
  \dist{x}{C},\quad\forall x\in\R^n.
$$
\end{proposition}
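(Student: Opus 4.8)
The plan is to peel off the term $\dist{x}{C}$, which already appears in the asserted formula, and reduce everything to the identity
$$
  \exc{\SVMAO(x)}{Q}=\sup_{u\in\Uball}\left[\supp{u}{\SVMAO(x)}-\supp{u}{Q}\right],\qquad\forall x\in\R^n,
$$
after which adding $\dist{x}{C}$ to both sides and invoking the definition (\ref{eq:pmfundef}) of $\pmfun$ finishes the argument. First I would record a preliminary fact used throughout: for each $x\in\R^n$ the set $\SVMAO(x)$ is a nonempty compact convex subset of $\R^m$, being the image of the nonempty compact convex set $\mathcal{U}$ (assumption $(a_2)$) under the continuous linear map $A\mapsto Ax$ on $\Lin$. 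In particular $\supp{u}{\SVMAO(x)}$ is finite for every $u$ and $\exc{\SVMAO(x)}{Q}$ is a well-defined real number.

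The core step is the classical pointwise duality formula: for every $a\in\R^m$,
$$
  \dist{a}{Q}=\sup_{u\in\Uball}\left[\langle u,a\rangle-\supp{u}{Q}\right].
$$
The inequality ``$\le$'' here — the one requiring an argument — I would obtain from the Euclidean projection onto the closed convex set $Q$ (cf. (\ref{eq:subddist})): if $a\in Q$ the left-hand side is $0$ while the right-hand side is $\ge 0$ (take $u=\nullv$) and $\le 0$ (since $\supp{u}{Q}\ge\langle u,a\rangle$), so both are $0$; if $a\notin Q$, set $\bar a=\Pi_Q(a)$ and $u^{*}=(a-\bar a)/\dist{a}{Q}\in\Usfer$, and use the variational inequality $\langle a-\bar a,y-\bar a\rangle\le 0$ for all $y\in Q$ to get $\supp{u^{*}}{Q}=\langle u^{*},\bar a\rangle$, whence $\langle u^{*},a\rangle-\supp{u^{*}}{Q}=\langle u^{*},a-\bar a\rangle=\dist{a}{Q}$. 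The reverse inequality ``$\ge$'' is immediate: for any $u\in\Uball$ and $y\in Q$ one has $\langle u,a\rangle-\supp{u}{Q}\le\langle u,a-y\rangle\le\|a-y\|$, and taking the infimum over $y\in Q$ bounds the left-hand side by $\dist{a}{Q}$.

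Finally I would substitute this into the definition of the excess and exchange the two suprema, which is always legitimate since both sides are the supremum of $(a,u)\mapsto\langle u,a\rangle-\supp{u}{Q}$ over $\SVMAO(x)\times\Uball$:
\begin{align*}
  \exc{\SVMAO(x)}{Q}&=\sup_{a\in\SVMAO(x)}\sup_{u\in\Uball}\left[\langle u,a\rangle-\supp{u}{Q}\right]
  =\sup_{u\in\Uball}\sup_{a\in\SVMAO(x)}\left[\langle u,a\rangle-\supp{u}{Q}\right]\\
  &=\sup_{u\in\Uball}\left[\supp{u}{\SVMAO(x)}-\supp{u}{Q}\right],
\end{align*}
the last equality because $\supp{u}{Q}$ does not depend on $a$ and $\sup_{a\in\SVMAO(x)}\langle u,a\rangle=\supp{u}{\SVMAO(x)}$. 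Adding $\dist{x}{C}$ to both sides gives the claim. I do not anticipate a real obstacle: the only points needing a little care are the ``$\le$'' half of the pointwise duality formula (handled via the projection) and the harmless bookkeeping when $Q$ is unbounded and $\supp{u}{Q}=+\infty$ for some directions $u$, in which case both inner expressions equal $-\infty$ and contribute nothing to the outer supremum.
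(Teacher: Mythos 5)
Your argument is correct, and it reaches the stated formula by a route that differs from the paper's at the key step. Both proofs reduce the claim to the identity $\exc{\SVMAO(x)}{Q}=\sup_{u\in\Uball}[\supp{u}{\SVMAO(x)}-\supp{u}{Q}]$ and use the harmless exchange of the two suprema; the difference lies in how the pointwise dual representation $\dist{a}{Q}=\sup_{u\in\Uball}[\langle u,a\rangle-\supp{u}{Q}]$ is obtained. The paper gets it by writing $\dist{y}{Q}=\min_{q\in Q}\max_{u\in\Uball}\langle u,y-q\rangle$ and invoking Sion's minimax theorem \cite{Sion58} to swap the min over $Q$ with the max over the compact convex ball $\Uball$; you instead prove the identity directly from the variational characterization of the Euclidean projection onto $Q$ (the same object behind formula (\ref{eq:subddist})), exhibiting the maximizing direction $u^{*}=(a-\Pi_Q(a))/\dist{a}{Q}$ when $a\notin Q$ and handling $a\in Q$ and the directions with $\supp{u}{Q}=+\infty$ separately. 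Your version is more elementary and self-contained (no minimax theorem, hence no need to verify its convexity/compactness hypotheses for a possibly unbounded $Q$), at the price of a short case analysis; the paper's version is quicker to state once Sion's theorem is taken as a black box. Either way the conclusion and the use of the compactness and convexity of $\SVMAO(x)$ (only to ensure finiteness of the quantities involved) are the same, so the proposal stands as a valid alternative proof.
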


\begin{proof}
Fix an arbitrary $x\in\R^n$. Clearly, on account of definition
(\ref{eq:pmfundef}), it suffices to show that
$$
  \exc{\SVMAO(x)}{Q}=\sup_{u\in\Uball}[\supp{u}{\SVMAO(x)}-\supp{u}{Q}].
$$
According to the definition of excess, one has
$$
  \exc{\SVMAO(x)}{Q} = \sup_{y\in\SVMAO(x)}\min_{q\in Q}\|y-q\|
   = \sup_{y\in\SVMAO(x)}\min_{q\in Q}\max_{u\in\Uball}
   \langle u,y-q\rangle.
$$
Since $\Uball$ is compact and the inner product is bilinear and continuous, it is
possible to apply the Sion's minimax theorem (see \cite{Sion58}),
in such  a way to obtain
\begin{eqnarray*}
  \exc{\SVMAO(x)}{Q} &=& \sup_{y\in\SVMAO(x)} \max_{u\in\Uball}\min_{q\in Q}
  \langle u,y-q\rangle=
   \sup_{y\in\SVMAO(x)}\max_{u\in\Uball}\left[\langle u,y\rangle-\supp{u}{Q}\right] \\
   &=& \sup_{u\in\Uball}[\supp{u}{\SVMAO(x)}-\supp{u}{Q}],
\end{eqnarray*}
thereby completing the proof. \hfill$\square$
\end{proof}

The present analysis is devised in such a way to take advantage from specific properties of
$\pmfun$, which are inherited by the particular form taken by the set-valued
map $\SVMAO$. To this purpose,
recall that a set-valued map $\mathcal{F}:\R^n\rightrightarrows\R^m$
is said to be
\begin{itemize}
\item[$\bullet$] p.h. on a cone $D\subseteq\R^n$ if
$$
  \mathcal{F}(\lambda x)=\lambda \mathcal{F}(x),\quad
  \forall \lambda>0,\ \forall x\in D;
$$
\item[$\bullet$] concave on the convex set $D\subseteq\R^n$ if
$$
  \mathcal{F}(tx_1+(1-t)x_2)\subseteq  t\mathcal{F}(x_1)+
  (1-t)\mathcal{F}(x_2),\quad
  \forall x_1,\ x_2\in D,\ \forall t\in[0,1];
$$
\item[$\bullet$] Lipschitz continuous on a set $D\subseteq\R^n$ if there exists $l\ge 0$
such that
$$
  \haus{\mathcal{F}(x_1)}{\mathcal{F}(x_2)}\le l\|x_1-x_2\|,
  \quad\forall x_1,\ x_2\in D.
$$
\end{itemize}

\begin{lemma}[Properties of $\SVMAO$]    \label{lem:SVMAOprop}
Let a set-valued map $\SVMAO:\R^n\rightrightarrows\R^m$ be defined
as in (\ref{eq:defSVMAO}). Under the standing assumptions $(a_0)-(a_2)$
the following assertions hold:
\begin{itemize}
  \item[(i)] $\dom\SVMAO=\R^n$;

  \item[(ii)] $\SVMAO$ takes convex and compact values;

  \item[(iii)] $\SVMAO(\nullv)=\{\nullv\}$;

  \item[(iv)] $\SVMAO$ is p.h. and concave on $\R^n$;

  \item[(v)] $\SVMAO$ is Lipschitz continuous on $\R^n$.

\end{itemize}
\end{lemma}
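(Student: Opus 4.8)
The strategy is to unwind the definition of $\SVMAO$ and transfer each property from the uncertainty set $\mathcal{U}$ (which, by $(a_1)$–$(a_2)$, is a nonempty compact convex subset of $\Lin$) via the evaluation map $A\mapsto Ax$. Throughout I would write
$$
  \SVMAO(x)=\{Ax\ :\ A\in\mathcal{U}\}=E_x(\mathcal{U}),\qquad
  E_x\colon\Lin\longrightarrow\R^m,\ E_x(A)=Ax,
$$
and observe that $E_x$ is linear in $A$ for each fixed $x$, and jointly continuous in $(A,x)$, with $\|Ax\|\le\|A\|_{\bf M}\|x\|$.

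\medskip

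First I would dispatch (i), (ii), (iii). Since $\mathcal{U}\ne\varnothing$, for every $x$ the set $\SVMAO(x)=E_x(\mathcal{U})$ is nonempty, giving $\dom\SVMAO=\R^n$; this is (i). For (ii): $E_x$ is linear, hence affine, so it maps the convex set $\mathcal{U}$ to a convex set, and it is continuous, so it maps the compact set $\mathcal{U}$ to a compact set; thus $\SVMAO(x)$ is convex and compact. For (iii): $E_{\nullv}(A)=A\nullv=\nullv$ for every $A$, so $\SVMAO(\nullv)=\{\nullv\}$ (using $\mathcal{U}\ne\varnothing$).

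\medskip

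Next, (iv). Positive homogeneity is immediate from linearity in $x$: for $\lambda>0$,
$$
  \SVMAO(\lambda x)=\{A(\lambda x)\ :\ A\in\mathcal{U}\}
  =\{\lambda(Ax)\ :\ A\in\mathcal{U}\}=\lambda\,\SVMAO(x).
$$
For concavity (which here, given p.h., is really additivity/sublinearity of the set-valued map on $\R^n$): fix $x_1,x_2$ and $t\in[0,1]$, and take $y\in\SVMAO(tx_1+(1-t)x_2)$, say $y=A(tx_1+(1-t)x_2)$ for some $A\in\mathcal{U}$. Then $y=t(Ax_1)+(1-t)(Ax_2)$ with $Ax_1\in\SVMAO(x_1)$ and $Ax_2\in\SVMAO(x_2)$ (same $A$), so $y\in t\,\SVMAO(x_1)+(1-t)\,\SVMAO(x_2)$. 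Hence the inclusion defining concavity holds. (Note this argument uses only that each individual $A_\omega$ is linear; convexity of $\mathcal{U}$ is not needed for (iv), only for (ii).)

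\medskip

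Finally, (v), which I expect to be the only place needing a small estimate rather than a one-line observation. Let $r=\max_{A\in\mathcal{U}}\|A\|_{\bf M}$, finite by compactness of $\mathcal{U}$. Given $x_1,x_2$, for any $A\in\mathcal{U}$ and the point $Ax_1\in\SVMAO(x_1)$, the point $Ax_2\in\SVMAO(x_2)$ satisfies $\|Ax_1-Ax_2\|\le\|A\|_{\bf M}\|x_1-x_2\|\le r\|x_1-x_2\|$, so $\dist{Ax_1}{\SVMAO(x_2)}\le r\|x_1-x_2\|$; taking the supremum over $A\in\mathcal{U}$ gives $\exc{\SVMAO(x_1)}{\SVMAO(x_2)}\le r\|x_1-x_2\|$. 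By symmetry the same bound holds for $\exc{\SVMAO(x_2)}{\SVMAO(x_1)}$, hence $\haus{\SVMAO(x_1)}{\SVMAO(x_2)}\le r\|x_1-x_2\|$, proving Lipschitz continuity on $\R^n$ with constant $r$. The main (and really only) subtlety is to make sure one pairs each $Ax_1$ with the \emph{same} $A$ acting on $x_2$, so that the bound is uniform in $A$; once that is noted the rest is the routine operator-norm inequality.
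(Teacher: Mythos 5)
Your proposal is correct and follows essentially the same route as the paper: the paper also introduces the evaluation map $\val_x(A_\omega)=A_\omega x$ (your $E_x$) to get (ii) by continuity and linearity, derives (i), (iii), (iv) directly from linearity of each $A_\omega$, and proves (v) by bounding $\|A_\omega\|_{\bf M}\le\beta$ via compactness and pairing $A_\omega x_1$ with $A_\omega x_2$ for the same $A_\omega$ before taking the excess and symmetrizing. Your explicit spelling-out of the concavity inclusion and the remark that convexity of $\mathcal{U}$ is only needed for (ii) are fine refinements of the paper's terser wording, but not a different argument.
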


\begin{proof}
(i) The equality trivially follows from formula (\ref{eq:defSVMAO})
and the nonemptiness of $\Omega$.

(ii) Fix an arbitrary $x\in\R^n$ and consider the evaluation map $\val_x:\Lin
\longrightarrow\R^m$, which is defined as
$$
   \val_x(A_\omega)=A_\omega x.
$$
Observe that, owing to $(a_1)$,  $\val_x$ is a linear map acting between finite-dimensional Euclidean spaces
and, as such, it is continuous. Since it is $\SVMAO(x)=\val_x(\Amap(\Omega))$
and, according to assumption $(a_2)$, $\Amap(\Omega)$ is compact, then also
its image through $\val_x$ is compact.
Since, according to assumption $(a_2)$, $\Amap(\Omega)$ is convex, also
$\val_x(\Amap(\Omega))$ is convex.

(iii) By assumption $(a_1)$ one has $A_\omega\nullv=\nullv$ for every
$\omega\in\Omega$, whence the equality in the assertion.

(iv) Both the mentioned properties are straightforward consequences of
the linearity of each map $A_\omega$.

(v) As $\Amap_\omega$ is a compact subset of $\Lin$, it must
be bounded, so there exists $\beta>0$ such that
\begin{equation}  \label{in:lipSVMAO}
 \|A_\omega\|_{\bf M}\le\beta,\quad\forall\omega\in\Omega.
\end{equation}
Take arbitrary $x_1,\, x_2\in\R^n$ and $y\in\SVMAO(x_1)$. Then,
it must be $y=A_{\omega_1}x_1$ for some $\omega_1\in\Omega$.
Then, by linearity of $A_{\omega_1}$ from inequality (\ref{in:lipSVMAO})
one obtains
\begin{eqnarray*}
  \dist{y}{\SVMAO(x_2)} &=& \dist{A_{\omega_1}x_1}{\SVMAO(x_2)}
    =\inf_{\omega\in\Omega} \|A_{\omega_1}x_1-A_\omega x_2\| \\
   &\le& \|A_{\omega_1}x_1-A_{\omega_1}x_2\|\le \|A_{\omega_1}\|_{\bf M}
   \|x_1-x_2\|  \\
   &\le &\beta\|x_1-x_2\|.
\end{eqnarray*}
Thus, by arbitrariness of $y\in\SVMAO(x_1)$ it is possible to deduce
\begin{eqnarray*}
  \exc{\SVMAO(x_1)}{\SVMAO(x_2)} &=& \sup_{y\in\SVMAO(x_1)}\dist{y}{\SVMAO(x_2)}  \\
   &\le &\beta\|x_1-x_2\|,\quad\forall x_1,\, x_2\in\R^n.
\end{eqnarray*}
Interchanging the role of $x_1,\, x_2\in\R^n$ leads to
$$
    \haus{\SVMAO(x_1)}{\SVMAO(x_2)}\le\beta\|x_1-x_2\|,\quad\forall x_1,\, x_2\in\R^n.
$$
This completes the proof.
\hfill $\square$
\end{proof}

\begin{remark}
Lemma \ref{lem:SVMAOprop} implies that $\SVMAO$ belongs to the class
of set-valued maps called fans after \cite{Ioff81}, in particular
to the subclass of the compactly generated ones. Like convex processes
in the sense of Rockafellar (see \cite[Section 39]{Rock70}), they can be regarded as the simplest
(set-valued) generalization of linear operators. Unlike convex processes,
they typically fail to have convex graphs.
\end{remark}

The properties of $\pmfun$ (inherited from those of $\SVMAO$) needed
for the subsequent analysis are gathered in the next lemma.

\begin{lemma}[Properties of $\pmfun$]     \label{lem:pmfunprop}
Let the function $\pmfun:\R^n\longrightarrow [0,+\infty]$ be defined
as in (\ref{eq:pmfundef}). Under the standing assumptions $(a_0)-(a_2)$
the following assertions hold:
\begin{itemize}
  \item[(i)] $\dom\pmfun=\R^n$;

  \item[(ii)] $\pmfun$ is convex on $\R^n$;

  \item[(iii)] $\pmfun$ is Lipschitz continuous on $\R^n$.
\end{itemize}
If, in addition, $C$ and $Q$ are cones, then
\begin{itemize}
  \item[(iv)] $\pmfun$ is p.h..
\end{itemize}
\end{lemma}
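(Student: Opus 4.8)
The plan is to split $\pmfun$ into its two summands and to note that $x\mapsto\dist{x}{C}$ is already a real-valued, $1$-Lipschitz, convex function on $\R^n$ (and p.h.\ when $C$ is a cone, since then $\dist{\lambda x}{C}=\lambda\dist{x}{C}$ for every $\lambda>0$), so that all four assertions reduce to establishing the analogous properties for the scalar functional $x\mapsto\exc{\SVMAO(x)}{Q}$.

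For (i), fix $x\in\R^n$: by Lemma \ref{lem:SVMAOprop}(i)--(ii) the value $\SVMAO(x)$ is a nonempty compact set, while $y\mapsto\dist{y}{Q}$ is real-valued (since $Q\ne\varnothing$) and continuous, hence bounded above on $\SVMAO(x)$; thus $\exc{\SVMAO(x)}{Q}<+\infty$, and adding $\dist{x}{C}<+\infty$ gives $\pmfun(x)<+\infty$. For (ii), the cleanest route is via the dual representation of Proposition \ref{pro:dualreppmfun}, $\exc{\SVMAO(x)}{Q}=\sup_{u\in\Uball}\left[\supp{u}{\SVMAO(x)}-\supp{u}{Q}\right]$: since a pointwise supremum of convex functions is convex, it suffices to check that for each fixed $u\in\Uball$ the map $x\mapsto\supp{u}{\SVMAO(x)}$ is convex, and this is where the concavity of $\SVMAO$ (Lemma \ref{lem:SVMAOprop}(iv)) enters, together with the monotonicity of the support function with respect to inclusion and its additivity and positive homogeneity: from $\SVMAO(tx_1+(1-t)x_2)\subseteq t\SVMAO(x_1)+(1-t)\SVMAO(x_2)$ one obtains $\supp{u}{\SVMAO(tx_1+(1-t)x_2)}\le t\supp{u}{\SVMAO(x_1)}+(1-t)\supp{u}{\SVMAO(x_2)}$. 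Adding back the convex function $\dist{\cdot}{C}$ then yields the convexity of $\pmfun$. (An equivalent, representation-free argument combines the concavity of $\SVMAO$ with the elementary inequality $\exc{tA_1+(1-t)A_2}{Q}\le t\exc{A_1}{Q}+(1-t)\exc{A_2}{Q}$, which holds because $Q$ is convex.)

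For (iii), I would invoke the general estimate $\left|\exc{A}{Q}-\exc{B}{Q}\right|\le\haus{A}{B}$ for nonempty $A,B\subseteq\R^m$ --- a one-line consequence of $\dist{a}{Q}\le\|a-b\|+\dist{b}{Q}$ after choosing $b$ almost realizing $\dist{a}{B}$ and then taking suprema --- and combine it with Lemma \ref{lem:SVMAOprop}(v): if $\beta$ is a Lipschitz constant for $\SVMAO$, then $x\mapsto\exc{\SVMAO(x)}{Q}$ is $\beta$-Lipschitz, hence $\pmfun$ is $(\beta+1)$-Lipschitz on $\R^n$. For (iv), with $C$ and $Q$ cones and $\lambda>0$, positive homogeneity of $\SVMAO$ (Lemma \ref{lem:SVMAOprop}(iv)) gives $\SVMAO(\lambda x)=\lambda\SVMAO(x)$, and $\dist{\lambda y}{Q}=\lambda\dist{y}{Q}$ holds for the cone $Q$, so $\exc{\SVMAO(\lambda x)}{Q}=\lambda\exc{\SVMAO(x)}{Q}$; together with $\dist{\lambda x}{C}=\lambda\dist{x}{C}$ this gives $\pmfun(\lambda x)=\lambda\pmfun(x)$, while $\pmfun(\nullv)=0$ because $\nullv\in C\cap Q$ and $\SVMAO(\nullv)=\{\nullv\}$ (Lemma \ref{lem:SVMAOprop}(iii)); hence $\pmfun$ is p.h.

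The only step carrying any real content is the convexity in (ii), i.e.\ transferring the set-valued concavity of $\SVMAO$ into ordinary convexity of the scalar functional $\exc{\SVMAO(\cdot)}{Q}$; everything else is routine bookkeeping with the distance, excess and Hausdorff-Pompeiu functionals, and I do not anticipate any genuine obstacle.
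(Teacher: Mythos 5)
Your proposal is correct, and for parts (i), (iii) and (iv) it coincides with the paper's own argument: finiteness via compactness of $\SVMAO(x)$ and continuity of $\dist{\cdot}{Q}$, Lipschitz continuity via the excess triangle inequality $\exc{A}{Q}\le\exc{A}{B}+\exc{B}{Q}$ together with Lemma \ref{lem:SVMAOprop}(v) (your estimate $|\exc{A}{Q}-\exc{B}{Q}|\le\haus{A}{B}$ is exactly this, after symmetrization), and positive homogeneity via Lemma \ref{lem:SVMAOprop}(iv) and the p.h.\ of distances to cones (your check that $\pmfun(\nullv)=0$ is harmless but not needed, since p.h.\ here only concerns $\lambda>0$). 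The only real divergence is in (ii): the paper works directly with the excess, combining the concavity inclusion $\SVMAO(tx_1+(1-t)x_2)\subseteq t\SVMAO(x_1)+(1-t)\SVMAO(x_2)$ with the convexity of $y\mapsto\dist{y}{Q}$ and taking suprema, which is precisely the ``representation-free'' argument you relegate to a parenthesis; your primary route instead goes through the support-function formula of Proposition \ref{pro:dualreppmfun}, a path the paper explicitly mentions (in the remark following the lemma) as an admissible alternative but does not carry out. Both are sound; the dual route buys a one-line convexity argument (pointwise supremum of the affine-in-$x$-behaving convex functions $x\mapsto\supp{u}{\SVMAO(x)}-\supp{u}{Q}$), at the cost of leaning on Sion's minimax theorem hidden in Proposition \ref{pro:dualreppmfun} and of a small bookkeeping point you should acknowledge: when $Q$ is unbounded, $\supp{u}{Q}=+\infty$ for some $u\in\Uball$, so the corresponding summands are identically $-\infty$ and must be handled in the extended-real-valued sense (they are still convex and do not affect the supremum, which equals the finite excess), whereas the paper's direct computation avoids extended values altogether.
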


\begin{proof}
Observe first that function $x\mapsto\dist{x}{C}$ is Lipschitz continuous on $\R^n$
and, as $C$ is convex, convex on $\R^n$. Thus, it suffices to prove that
all the above assertions hold true for the function $x\mapsto\exc{\SVMAO(x)}{Q}$.

(i) According to Lemma \ref{lem:SVMAOprop}(ii), $\SVMAO(x)$ is compact and,
as such, bounded, for every $x\in\R^n$. Thus, by the continuity of function
$y\mapsto\dist{y}{Q}$ one has $\exc{\SVMAO(x)}{Q}<+\infty$, for every $x\in\R^n$.

(ii) In the light of Lemma \ref{lem:SVMAOprop}(iv), it is true that
$$
  \SVMAO(tx_1+(1-t)x_2)\subseteq  t\SVMAO(x_1)+(1-t)\SVMAO(x_2),\quad
  \forall x_1,\ x_2\in\R^n, \forall  t\in [0,1].
$$
Then,  by virtue of the convexity of function $y\mapsto\dist{y}{Q}$,
it follows
\begin{eqnarray*}
  \exc{\SVMAO(tx_1+(1-t)x_2)}{Q} &=& \sup_{y\in\SVMAO(tx_1+(1-t)x_2)}
  \dist{y}{Q} \\
   &\le & \sup_{y\in  t\SVMAO(x_1)+(1-t)\SVMAO(x_2)}
   \dist{y}{Q} \\
   &\le& \sup_{y_1\in\SVMAO(x_1)\atop y_2\in\SVMAO(x_2)}
   \dist{ty_1+(1-t)y_2}{Q}  \\
   &\le& \sup_{y_1\in\SVMAO(x_1)\atop y_2\in\SVMAO(x_2)}
   [t\dist{y_1}{Q}+(1-t)\dist{y_2}{Q}]  \\
   &=& t\sup_{y_1\in\SVMAO(x_1)}\dist{y_1}{Q}+
   (1-t)\sup_{y_2\in\SVMAO(x_2)}\dist{y_2}{Q} \\
   &=& t\exc{\SVMAO(x_1)}{Q}+(1-t)\exc{\SVMAO(x_2)}{Q},  \\
   & & \qquad\qquad\qquad\qquad\forall x_1,\ x_2\in\R^n, \forall  t\in [0,1].
\end{eqnarray*}

(iii) By recalling that, if $A$ and $B$ are arbitrary subsets of $\R^m$, it is
$\exc{A}{Q}\le \exc{A}{B}+\exc{B}{Q}$, in the light of Lemma \ref{lem:SVMAOprop}(v)
one obtains
\begin{eqnarray*}
  \exc{\SVMAO(x_1)}{Q} &\le& \exc{\SVMAO(x_1)}{\SVMAO(x_2)}+ \exc{\SVMAO(x_2)}{Q} \\
   &\le & \beta\|x_1-x_2\|+\exc{\SVMAO(x_2)}{Q}, \quad\forall x_1,\ x_2\in\R^n,
\end{eqnarray*}
whence
$$
  \exc{\SVMAO(x_1)}{Q}-\exc{\SVMAO(x_2)}{Q}\le\beta\|x_1-x_2\|,
  \quad\forall x_1,\ x_2\in\R^n.
$$
By interchanging the role of $x_1$ and $x_2$, one can deduce the
Lipschitz continuity of $x\mapsto\exc{\SVMAO(x)}{Q}$.

(iv) In the light of Lemma \ref{lem:SVMAOprop}(iv), it is true that
$$
  \SVMAO(tx)=t\SVMAO(x),\quad
  \forall x\in\R^n,\ \forall  t>0.
$$
So it suffices to recall that, under the additional hypotheses, functions
$x\mapsto\dist{x}{C}$ and $y\mapsto\dist{y}{Q}$ are p.h. and hence
\begin{eqnarray*}
  \exc{\SVMAO(t x)}{Q} &=& \sup_{y\in\SVMAO(tx)}\dist{y}{Q}
  =\sup_{y\in t\SVMAO(x)}\dist{y}{Q} \\
  &=& \sup_{y\in \SVMAO(x)}\dist{ty}{Q}=
  t\sup_{y\in\SVMAO(x)}\dist{y}{Q} \\
  &=& t\exc{\SVMAO(x)}{Q},\quad\forall x\in\R^n,\ \forall  t>0.
\end{eqnarray*}
This completes the proof. \hfill $\square$
\end{proof}

\begin{remark}
(i) It is worth pointing out that assertion (iii) in Lemma \ref{lem:pmfunprop}
is not a mere consequence of assertion (ii). Indeed, the convexity of a function
whose domain is $\R^n$ can only entail a local Lipschitz behaviour for
such a function. Assertion (iii) says that $\pmfun$ is globally Lipschitz
continuous, with a constant $\beta$ (the same Lipschitz constant
as $\SVMAO$) valid all over $\R^n$.

(ii) All the properties of $\pmfun$ mentioned in Lemma \ref{lem:pmfunprop}
can be established in an alternative way, by exploiting well-known
properties of the support function and the representation of $\pmfun$
provided in Proposition \ref{pro:dualreppmfun}.
\end{remark}


\subsection{\bf Subtransversality of sets}

The set $C$ plays the role of a geometric constraint for a problem $(\RSFP)$.
A geometric qualification for collections of sets, which enables one to
treat separately the constraint $C$ and the set-valued inclusion problem
$\SVMAO(x)\subseteq Q$ relies on the concept of subtransversality. This concept,
which can be regarded as a metric generalization of the notion of transversality,
as it is classically understood in differential topology (see, for instance,
\cite[Chapter 3]{Hirs76}), formalizes a certain ``good" mutual arrangement
of several sets in space. It emerged as a regularity property under different
names in different topics within variational analysis, such as subdifferential
and normal calculus, Lipschitzian behaviour of set-valued maps (calmness and
metric subregularity), weak sharp minima (see \cite{BauBor96,Ioff17,KrLuTh17}).
Its development was definitely stimulated by investigations in the convergence
theory of projection methods for feasibility problems (see \cite{KrLuTh18}).
Recall that a pair of subsets $S_1,\, S_2$ of an Euclidean space is said to be
{\it subtransversal} at $\bar s\in S_1\cap S_2$ if there exist
positive constants $\eta$ and $\delta$ such that
$$
 \left[S_1+(\eta r)\Uball\right]\cap
 \left[S_2+(\eta r)\Uball\right]\cap\ball{\bar s}{\delta}
 \subseteq (S_1\cap S_2)+r\Uball,
 \quad\forall r\in [0,\delta).
$$
This property admits the following equivalent metric reformulation,
which will be exploited in the sequel: the pair  $S_1,\, S_2$ is
subtransversal at $\bar s$ iff there exist $\kappa,\, \delta>0$ such that
\begin{equation}    \label{in:defsubtransv}
  \dist{x}{S_1\cap S_2}\le\kappa\left[\dist{x}{S_1}+
  \dist{x}{S_2}\right],\quad\forall x\in\ball{\bar s}{\delta}
\end{equation}
(see \cite[Theorem 1(ii)]{KrLuTh18}).
The geometric idea behind the above metric inequality is
transparent: ``if you are close to both the sets of the pair,
then the intersection cannot be too far away" to quote \cite{BauBor96}.
Referring the reader to \cite{BauBor96,Ioff17,KrLuTh17,KrLuTh18}
for major details, for the purposes of the present analysis
it is useful to recall some situations,
in which this property takes place.
If it is $\bar s\in\inte(S_1\cap S_2)$, then the pair $S_1,\, S_2$
is subtransversal at $\bar s$.
If $S_1$ and $S_2$ are closed and convex and $\nullv\in\inte(S_1-S_2)$,
the pair $S_1,\, S_2$ is subtransversal at each point $\bar s
\in S_1\cap S_2$.
Whenever $S_1$ and $S_2$ are both polyhedral sets (namely, they can be
expressed as the intersection of finitely many closed halfspaces)
and $\bar s$ is any element in their intersection, the inequality in (\ref{in:defsubtransv})
becomes globally true ($\delta$ can be taken arbitrarily positive)
(see \cite[Theorem 8.35]{Ioff17}).

\vskip1cm


\section{Conditions for solvability and error bounds}
\label{Sect:3}

Before exposing the main result of the paper, it is worthwhile
to summarize general properties enjoyed by the solution set to $(\RSFP)$
(robust feasible solutions) in the adopted setting.

\begin{proposition}
Under the assumptions $(a_0)-(a_2)$,
\begin{itemize}
  \item[(i)] $\Solv$ is a closed convex (possibly empty) set;

  \item[(ii)] if, in addition, $C$ and $Q$ are cones, $\Solv$
  is also a cone (thereby, nonempty).
\end{itemize}
\end{proposition}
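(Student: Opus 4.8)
The plan is to obtain the proposition as an almost immediate consequence of the functional characterization of $\Solv$ and of the properties of the residual function $\pmfun$ already established in the preliminaries.

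For part (i), I would start from Proposition~\ref{pro:Solvchar}, which gives $\Solv=[\pmfun\le 0]$. By Lemma~\ref{lem:pmfunprop}(ii) the function $\pmfun$ is convex on $\R^n$, and by Lemma~\ref{lem:pmfunprop}(iii) it is Lipschitz continuous, hence continuous, on $\R^n$. Therefore $[\pmfun\le 0]$, being the $0$-sublevel set of a continuous convex function, is a closed convex set; it may of course be empty, since at this stage no assumption guaranteeing robust feasibility has been imposed. This settles (i).

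For part (ii), assume in addition that $C$ and $Q$ are cones. I would first observe that $\nullv\in C$ and $\nullv\in Q$: picking any $c\in C$ (possible since $C\ne\varnothing$), one has $\lambda c\in C$ for every $\lambda>0$, and letting $\lambda\downarrow 0$ the closedness of $C$ yields $\nullv\in C$; the same argument applies to $Q$. Combining this with Lemma~\ref{lem:SVMAOprop}(iii), namely $\SVMAO(\nullv)=\{\nullv\}$, one obtains
$$
  \pmfun(\nullv)=\exc{\SVMAO(\nullv)}{Q}+\dist{\nullv}{C}
  =\exc{\{\nullv\}}{Q}+\dist{\nullv}{C}=0,
$$
so $\nullv\in[\pmfun\le 0]=\Solv$; in particular $\Solv\ne\varnothing$. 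Next, Lemma~\ref{lem:pmfunprop}(iv) asserts that under these additional hypotheses $\pmfun$ is positively homogeneous, so for every $x\in\Solv$ and every $\lambda>0$ one has $\pmfun(\lambda x)=\lambda\pmfun(x)=0$, i.e. $\lambda x\in\Solv$. Together with $\nullv\in\Solv$ this shows that $\Solv$ is a cone; being also convex by part (i), this completes the proof.

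As for the main obstacle, there is no deep one here: the substance has been front-loaded into Lemmas~\ref{lem:SVMAOprop} and~\ref{lem:pmfunprop} and Proposition~\ref{pro:Solvchar}. The only point requiring a moment's care is the convention attached to the word \emph{cone}: one must ensure that the origin belongs to $C$ and $Q$ (and hence to $\Solv$), which is precisely where the closedness of these sets enters; without it, $\Solv$ could fail to contain $\nullv$ and the nonemptiness claim in (ii) would be unjustified. An alternative, equally short route would bypass $\pmfun$ altogether and argue directly from $\Solv=C\cap\bigcap_{\omega\in\Omega}A_\omega^{-1}(Q)$, using that each $A_\omega^{-1}(Q)$ is closed and convex, and a cone whenever $Q$ is, together with the stability of closedness, convexity and conicity under arbitrary intersections.
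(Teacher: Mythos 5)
Your proof is correct. Part (i) is exactly the paper's argument: $\Solv=[\pmfun\le 0]$ from Proposition~\ref{pro:Solvchar} together with convexity and (Lipschitz) continuity of $\pmfun$ from Lemma~\ref{lem:pmfunprop}(ii)--(iii). For part (ii) you take a genuinely different route: you stay inside the functional framework, showing $\nullv\in C\cap Q$ by closedness, hence $\pmfun(\nullv)=0$ via $\SVMAO(\nullv)=\{\nullv\}$ (Lemma~\ref{lem:SVMAOprop}(iii)), and then invoking the positive homogeneity of $\pmfun$ (Lemma~\ref{lem:pmfunprop}(iv)) to get stability of $\Solv$ under positive scalings. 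The paper instead argues directly from the representation $\Solv=C\cap\bigcap_{\omega\in\Omega}A_\omega^{-1}(Q)$ in (\ref{eq:Solvdef}): each $A_\omega^{-1}(Q)$ is a closed cone, so the intersection with the closed cone $C$ is a closed cone --- precisely the alternative you sketch in your closing remark. Both arguments are sound; the paper's is a one-line set-theoretic observation that does not need Lemma~\ref{lem:pmfunprop}(iv), while yours has the merit of making explicit where nonemptiness comes from (namely $\nullv\in\Solv$, which you correctly extract from closedness of $C$ and $Q$), a point the paper leaves implicit in the parenthetical ``thereby, nonempty''.
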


\begin{proof}
(i) This assertion is a straightforward consequence of Proposition \ref{pro:Solvchar}
and Lemma \ref{lem:pmfunprop}(ii) and (iii).

(ii) In this case, as $Q$ is a closed cone, $A_\omega^{-1}(Q)$ is a closed
cone for every $\omega\in\Omega$. Therefore, $\SVMAO^{+}(Q)=\bigcap_{\omega\in\Omega}
A_\omega^{-1}(Q)$ is a closed cone. Since so is $C$ too, one achieves
the thesis by remembering the characterization of $\Solv$ in (\ref{eq:Solvdef}).
\hfill $\square$
\end{proof}

Let us consider the partition $\{R_i\subseteq\R^n\ :\ i=1,2,3\}$ of the set $[\pmfun>0]$,
which is defined as follows
$$
  R_1=\{x\in\R^n\ :\  \exc{\SVMAO(x)}{Q}>0,\quad \dist{x}{C}>0\},
$$
$$
  R_2=\{x\in\R^n\ :\  \exc{\SVMAO(x)}{Q}=0,\quad \dist{x}{C}>0\},
$$
and
$$
  R_3=\{x\in\R^n\ :\  \exc{\SVMAO(x)}{Q}>0,\quad \dist{x}{C}=0\}.
$$
In what follows, it is convenient to collect in the clean-up set,
henceforth denoted by
$$
  \Omega_x=\{\omega\in\Omega\ :\ \dist{A_\omega x}{Q}=\exc{\SVMAO(x)}{Q}\},
$$
all those values of $\omega$ for which $A_\omega x$ is a farthest
element in $\SVMAO(x)$ from $Q$. Notice that, as $\SVMAO(x)$
is nonempty and compact, it is $\Omega_x\ne\varnothing$, for
every $x\in\R^n$. From the robustness point of view $\Omega_x$
singles out those uncertain scenarios (and corresponding matrices),
which are accountable for the worst violation of the constraint system.

In view of the statement of the next result, let us introduce the following constants
associated with the data of a given $(\RSFP)$:
\begin{eqnarray*}
  \ebcon_1=\inf\Biggl\{\|v\|\ :\  v\in\left[\clco\bigcup_{\omega\in\Omega_x}
  A_\omega^\top\frac{A_\omega x-\Pi_Q(A_\omega x)}{\dist{A_\omega x}{Q}}\right]+
  {x-\Pi_C(x)\over\dist{x}{C}}, \quad
     x\in R_1\Biggl\}
\end{eqnarray*}
\begin{eqnarray*}
  \ebcon_2=\inf\Biggl\{\|v\|\ :\ v\in\left[\clco\bigcup_{\omega\in\Omega}
  A_\omega^\top(\Ncone{A_\omega x}{Q}\cap\Uball)\right]+
  {x-\Pi_C(x)\over\dist{x}{C}}, \quad
   x\in R_2\Biggl\}
\end{eqnarray*}
and
\begin{eqnarray*}
  \ebcon_3=\inf\Biggl\{\|v\|\ :\ v\in\left[\clco\bigcup_{\omega\in\Omega_x}
  A_\omega^\top\frac{A_\omega x-\Pi_Q(A_\omega x)}{\dist{A_\omega x}{Q}}\right]+
  \left(\Ncone{x}{C}\cap\Uball\right),   \quad
  x\in R_3\Biggl\}.
\end{eqnarray*}

To understand the meaning of the above constants, it is helpful
to remark that
$$
   \frac{A_\omega x-\Pi_Q(A_\omega x)}{\dist{A_\omega x}{Q}}
   \in\Ncone{\Pi_Q(A_\omega x}{Q}\cap\Usfer,
   \quad\forall \omega\in\Omega_x,\ \forall x\in R_1\cup R_3,
$$
and
$$
  {x-\Pi_C(x)\over\dist{x}{C}}\in\Ncone{\Pi_C(x)}{C}
  \cap\Usfer,\quad\forall x\in R_1\cup R_2.
$$
On the basis of the above constructions, define
\begin{equation}    \label{eq:defebmincon}
   \ebconglo=\min\{\ebcon_1,\ebcon_2,\ebcon_3\}.
\end{equation}
One is now in a position to establish a condition for the existence
of robust feasible solutions and global error bounds for $(\RSFP)$, which are
expressed via the residual function $\pmfun$.

\begin{theorem}     \label{thm:psufcond}
With reference to $(\RSFP)$, let assumptions $(a_0)-(a_2)$ be satisfied.
If $\Omega$ is a separated compact topological space and
it is $\ebconglo>0$, then $\Solv\ne\varnothing$ and the following
estimate holds
\begin{equation}    \label{in:ebthm}
  \dist{x}{\Solv}\le\ebconglo^{-1}\pmfun(x),\quad\forall x\in\R^n.
\end{equation}
\end{theorem}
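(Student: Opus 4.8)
My plan is to apply Proposition~\ref{pro:sovebconvex} with $\varphi=\pmfun$, which by Lemma~\ref{lem:pmfunprop}(i)--(ii) is a finite-valued convex function on $\R^n$, and then to combine its conclusion with the identification $[\pmfun\le 0]=\Solv$ from Proposition~\ref{pro:Solvchar}. If $[\pmfun>0]=\varnothing$, then $\pmfun\equiv 0$ (recall $\pmfun\ge 0$), so $\Solv=\R^n$ and (\ref{in:ebthm}) is trivial; hence I would assume $[\pmfun>0]\ne\varnothing$. Everything then reduces to showing that the constant $\tau$ of (\ref{def:ebsolvconvex}), namely $\tau=\inf_{x\in[\pmfun>0]}\dist{\nullv}{\partial\pmfun(x)}$, satisfies $\tau\ge\ebconglo$. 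Granting this, Proposition~\ref{pro:sovebconvex} gives $\Solv=[\pmfun\le 0]\ne\varnothing$ together with $\dist{x}{\Solv}\le\pmfun(x)/\tau\le\ebconglo^{-1}\pmfun(x)$ for every $x\in[\pmfun>0]$ (using $\pmfun\ge 0$ and $0<\ebconglo\le\tau$), while on $\Solv$ both sides of (\ref{in:ebthm}) vanish; so (\ref{in:ebthm}) holds on all of $\R^n$.

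To estimate $\partial\pmfun(x)$ at a point $x\in[\pmfun>0]$ I would first use the sum rule (\ref{eq:subdsumr}) for the two finite convex functions $x\mapsto\exc{\SVMAO(x)}{Q}$ (finiteness by Lemma~\ref{lem:pmfunprop}(i)) and $x\mapsto\dist{x}{C}$, obtaining $\partial\pmfun(x)=\partial(\exc{\SVMAO(\cdot)}{Q})(x)+\partial\dist{\cdot}{C}(x)$. For the first summand the key point is the representation $\exc{\SVMAO(x)}{Q}=\sup_{A\in\mathcal{U}}\dist{Ax}{Q}$, a supremum over the index set $\mathcal{U}=\Amap(\Omega)$ of the convex functions $\varphi_A:=\dist{\cdot}{Q}\circ A$. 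Since $\mathcal{U}$ is a compact subset of the normed space $\Lin$, it is a separated compact topological space; for fixed $x$ the map $A\mapsto\dist{Ax}{Q}$ is continuous on $\mathcal{U}$ (composition of the linear evaluation $A\mapsto Ax$ with $\dist{\cdot}{Q}$), hence u.s.c.; and each $\varphi_A$ is continuous on $\R^n$. Thus the Ioffe--Tikhomirov rule (\ref{eq:subdITr}), together with the chain rule (\ref{eq:subdchainr}) in the form $\partial\varphi_A(x)=A^\top\partial\dist{\cdot}{Q}(Ax)$, yields $\partial(\exc{\SVMAO(\cdot)}{Q})(x)=\clco\left(\bigcup_{\omega\in\Omega_x}A_\omega^\top\partial\dist{\cdot}{Q}(A_\omega x)\right)$, where the clean-up set $\{A\in\mathcal{U}:\dist{Ax}{Q}=\exc{\SVMAO(x)}{Q}\}$ has been rewritten as $\{A_\omega:\omega\in\Omega_x\}$, legitimate since the expressions involved depend on the scenario only through the matrix $A_\omega$.

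It would then remain to substitute the two-branch formula (\ref{eq:subddist}) for $\partial\dist{\cdot}{Q}$ and for $\partial\dist{\cdot}{C}$ according to the region containing $x$. On $R_1$ one has $A_\omega x\notin Q$ for every $\omega\in\Omega_x$ (because $\dist{A_\omega x}{Q}=\exc{\SVMAO(x)}{Q}>0$) and $x\notin C$, so $\partial\pmfun(x)$ is exactly the set appearing in the definition of $\ebcon_1$; on $R_2$ the equality $\exc{\SVMAO(x)}{Q}=0$ forces $A_\omega x\in Q$ for all $\omega\in\Omega$, hence $\Omega_x=\Omega$ and the normal-cone branch applies, so $\partial\pmfun(x)$ matches the definition of $\ebcon_2$; on $R_3$ one has $A_\omega x\notin Q$ for $\omega\in\Omega_x$ but $x\in C$, matching $\ebcon_3$. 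In each case $\dist{\nullv}{\partial\pmfun(x)}\ge\ebcon_i\ge\ebconglo$ by (\ref{eq:defebmincon}), whence $\tau\ge\ebconglo>0$, as required. The main obstacle is precisely this case analysis: verifying the hypotheses of the Ioffe--Tikhomirov rule over the index set $\mathcal{U}$ (note that no continuity of $\Amap$ on $\Omega$ has been assumed, so one must not index the supremum by $\Omega$), correctly identifying the clean-up set with $\{A_\omega:\omega\in\Omega_x\}$, and checking that the resulting subdifferential coincides, on $R_1$, $R_2$ and $R_3$ respectively, with the sets prescribed in the definitions of $\ebcon_1$, $\ebcon_2$, $\ebcon_3$; the remaining steps are the two cited propositions and elementary bookkeeping.
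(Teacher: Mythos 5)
Your proposal is correct and follows essentially the same route as the paper: reduce to Proposition~\ref{pro:sovebconvex} via $\Solv=[\pmfun\le 0]$, then compute $\partial\pmfun$ on $R_1$, $R_2$, $R_3$ with the sum rule, the Ioffe--Tikhomirov rule, the chain rule and formula (\ref{eq:subddist}), and identify the resulting sets with those defining $\ebcon_1$, $\ebcon_2$, $\ebcon_3$ to get $\dist{\nullv}{\partial\pmfun(x)}\ge\ebconglo$. The only deviation is that you index the supremum by the compact set $\mathcal{U}\subseteq\Lin$ rather than by $\Omega$ (the paper applies the rule directly over $\Omega$, using its compactness); your variant is a harmless, indeed slightly more careful, way to verify the upper semicontinuity hypothesis since no continuity of $\Amap$ is assumed, and it leads to the same clean-up set and the same conclusion.
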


\begin{proof}
In the light of Proposition \ref{pro:Solvchar} the characterization
$\Solv=[\pmfun\le 0]$ holds.
Consequently, observe that in the case $[\pmfun>0]=\varnothing$
it happens that $\Solv=\R^n$, so the estimate in (\ref{in:ebthm})
becomes trivially true.
To the contrary, suppose now that $[\pmfun>0]\ne\varnothing$.
By Lemma \ref{lem:pmfunprop}(ii) $\pmfun:\R^n\longrightarrow\R$
is convex. Thus, all the assertions in the thesis can
be achieved by applying Proposition \ref{pro:sovebconvex}, upon
showing that $\ebconglo>0$ implies the condition in (\ref{def:ebsolvconvex})
to be actually satisfied, with $\tau=\ebconglo$. So, take an arbitrary $x\in [\pmfun>0]$.
If this is true, one (and only one) of the following cases must take place:

\noindent $\bullet$ Case $x\in R_1$:
according to the sum rule for subdifferential (\ref{eq:subdsumr}),
one has
\begin{equation}    \label{eq:subdexcdistsumr}
  \partial\pmfun (x)=\partial\exc{\SVMAO(\cdot)}{Q}(x)+
  \partial\dist{\cdot}{C}(x).
\end{equation}
By the Ioffe-Tikhomirov rule (\ref{eq:subdITr}), which can be invoked
under the assumption made on $\Omega$, one finds
\begin{eqnarray*}
   \partial\exc{\SVMAO(\cdot)}{Q}(x) &=& \partial\left(\sup_{\omega\in\Omega}
   \dist{A_\omega}{Q}\right)(x) \\
   &=& \clco\left(\bigcup_{\omega\in\Omega_x}\partial
   \dist{A_\omega}{Q}(x)\right).
\end{eqnarray*}
Then, the chain rule for the subdifferential calculus (\ref{eq:subdchainr})
enables one to write
$$
  \partial\dist{A(\omega,\cdot)}{Q}(x)=A_\omega^\top
  \left(\partial\dist{\cdot}{Q}(A_\omega x)\right).
$$
Since in the current case it is $A_\omega x\not\in Q$, because
$\omega\in\Omega_x$ and $\exc{\SVMAO(x)}{Q}>0$, then according to formula
(\ref{eq:subddist}) it results in
$$
   \partial\dist{\cdot}{Q}(A_\omega x)=\left\{
   {A_\omega x-\Pi_Q(A_\omega x)\over\dist{A_\omega x}{Q}}\right\}.
$$
Again, as it is $x\not\in C$, the same subdifferential rule gives
\begin{equation}    \label{eq:subddistCNconeB}
  \partial\dist{\cdot}{C}(x)=\left\{{x-\Pi_C(x)\over\dist{x}{C}}\right\}
\end{equation}
Thus, if $x\in R_1$ one obtains
$$
  \partial\pmfun(x)=
 \left[\clco\bigcup_{\omega\in\Omega_x}
  A_\omega^\top\frac{A_\omega x-\Pi_Q(A_\omega x)}{\dist{A_\omega x}{Q}}\right]+
  {x-\Pi_C(x)\over\dist{x}{C}}.
$$
By taking into account the definition of $\ebcon_1$, the last equality
allows one to say that if $v\in\partial\pmfun(x)$ it must be $\|v\|\ge\ebcon_1\ge\ebconglo>0$,
which shows that the condition in (\ref{def:ebsolvconvex}) turns out to be satisfied.

\noindent $\bullet$ Case $x\in R_2$:
notice that formula (\ref{eq:subdexcdistsumr}) still holds, whereas, as it is
$\exc{\SVMAO(\cdot)}{Q}(x)=0$, now it happens that $A_\omega x\in Q$, for every $\omega\in\Omega$.
In other words, it is $\Omega_x=\Omega$. In such an event,
by formula $(\ref{eq:subddist})$ one obtains
$$
  \partial\dist{\cdot}{Q}(A_\omega x)=\Ncone{A_\omega x}{Q}\cap\Uball,
$$
while equality (\ref{eq:subddistCNconeB}) is still valid.
As a consequence, if $x\in R_2$ one obtains
$$
  \partial\pmfun(x)=
 \left[\clco\bigcup_{\omega\in\Omega}
  A_\omega^\top\left(\Ncone{A_\omega x}{Q}\cap\Uball\right)\right]+
  {x-\Pi_C(x)\over\dist{x}{C}}.
$$
Thus, by recalling the definition of $\ebcon_2$, one deduces
that if $v\in\partial\pmfun(x)$ it must be  $\|v\|\ge\ebcon_2\ge\ebconglo>0$,
saying that the condition in (\ref{def:ebsolvconvex}) is again satisfied.

\noindent $\bullet$ Case $x\in R_3$: by proceeding in the same
manner as in the case $x\in R_1$, one obtains
$$
  \partial\exc{\SVMAO(\cdot)}{Q}(x)=\clco\bigcup_{\omega\in\Omega_x}
  A_\omega^\top\frac{A_\omega x-\Pi_Q(A_\omega x)}{\dist{A_\omega x}{Q}}.
$$
Since in the present case it is $x\in C$, by taking into account the
formula in (\ref{eq:subddist}) (second case), one finds
$$
   \partial\pmfun(x)=\left[\clco\bigcup_{\omega\in\Omega_x}
  A_\omega^\top\frac{A_\omega x-\Pi_Q(A_\omega x)}{\dist{A_\omega x}{Q}}\right]+
  \left(\Ncone{x}{C}\cap\Uball\right).
$$
Therefore, according to the definition of $\ebcon_3$, one can state
that if $v\in\partial\pmfun(x)$ it must be  $\|v\|\ge\ebcon_3\ge\ebconglo>0$.
Once again the condition in (\ref{def:ebsolvconvex}) turns out to be satisfied.

This completes the proof. \hfill $\square$
\end{proof}

The next example shows that the positivity condition on $\ebconglo$ in Theorem \ref{thm:psufcond}
is essential already for getting the existence of robust feasible solutions.

\begin{example}
Let $n=1$, $m=2$, and $(\RSFP)$ be defined by the data
$\Omega=[-1,1]\subseteq\R$, $C=[1,+\infty)$, $Q=[0,+\infty)\times\{0\}$,
with $\Amap:[-1,1]\longrightarrow{\bf M}_{2\times 1}(\R)$ given by
$$
  \Amap(\omega)=\binom{1}{\omega},
                      \qquad \omega\in [-1,1].
$$
Assumptions $(a_0)-(a_2)$ are evidently fulfilled.
The reformulation as a set-valued inclusion problem amounts to
$$
   \hbox{find $x\in [1,+\infty)$ such that}\quad
    \mathcal{A}_{\mathcal{U}}(x)\subseteq[0,+\infty)\times\{0\},
$$
where
$$
  \mathcal{A}_{\mathcal{U}}(x)=\conv\left\{\binom{x}{x},\binom{x}{-x}\right\}.
$$
It is readily seen that
$$
   \Solv=
   \mathcal{A}_{\mathcal{U}}^{+1}\left([0,+\infty)\times\{0\}\right)
   \cap [1,+\infty)=\{0\}\cap [1,+\infty)=\varnothing.
$$
With reference to the partition $\{R_i\subseteq\R\ :\ i=1,2,3\}$,
whenever taking $x\in (0,1)\subset R_1$, one finds $\Omega_x=\{-1,\, 1\}$.
In the case $\omega=1$, one has
$$
  \frac{A_1x-\Pi_{[0,+\infty)\times\{0\}}(A_1x)}{\dist{A_1x}{[0,+\infty)\times\{0\}}}
  =\frac{\displaystyle\binom{x}{x}-\displaystyle\binom{x}{0}}{x}=\binom{0}{1},
$$
whereas in the case $\omega=-1$, one has
$$
  \frac{A_{-1}x-\Pi_{[0,+\infty)\times\{0\}}(A_{-1}x)}{\dist{A_{-1}x}{[0,+\infty)\times\{0\}}}
  =\frac{\displaystyle\binom{x}{-x}-\displaystyle\binom{x}{0}}{x}=\binom{0}{-1},
$$
Consequently, one obtains
$$
  A_1^\top\frac{A_1x-\Pi_{[0,+\infty)\times\{0\}}(A_1x)}{\dist{A_1x}{[0,+\infty)\times\{0\}}}
  =\left(1\ \ 1\right)\binom{0}{1}=1
$$
and similarly
$$
  A_{-1}^\top\frac{A_{-1}x-\Pi_{[0,+\infty)\times\{0\}}(A_{-1}x)}{\dist{A_{-1}x}{[0,+\infty)\times\{0\}}}
  =(1\ -1)\binom{0}{-1}=1.
$$
Since for $x\in R_1$ it is
$$
  \frac{x-\Pi_{[1,+\infty)}(x)}{\dist{x}{[1,+\infty)}}=\frac{x-1}{1-x}=-1,
$$
it results in
\begin{eqnarray*}
  \clco\bigcup_{\omega\in\{-1,\, 1\}} A_\omega^\top
  \frac{A_\omega x-\Pi_{[0,+\infty)\times\{0\}}(A_\omega x)}{\dist{A_\omega x}{[0,+\infty)\times\{0\}}}
  +\frac{x-\Pi_{[1,+\infty)}(x)}{\dist{x}{[1,+\infty)}} \\
  =\{1\}+\{-1\}=\{0\}.
\end{eqnarray*}
Thus one sees that $\ebcon_1=\ebconglo=0$, that is the condition $\ebconglo>0$
fails to be satisfied.
\end{example}

As a further comment on Theorem \ref{thm:psufcond}, it must be remarked
that it provides a condition for solution existence and error bounds,
which is in general only sufficient, as illustrated by the next example.

\begin{example}
Let $n=1$, $m=2$, and $(\RSFP)$ be defined by the data
$\Omega=[0,1]\subseteq\R$, $C=[0,+\infty)=\R_+$, $Q=\R\times(-\infty,0]$,
with $\Amap:[0,1]\longrightarrow{\bf M}_{2\times 1}(\R)$ given by
$$
  \Amap(\omega)=\binom{1}{\omega},
                      \qquad \omega\in [0,1].
$$
The set-valued inclusion problem reformulation amounts to
$$
   \hbox{find $x\in [0,+\infty)$ such that}\quad
    \mathcal{A}_{\mathcal{U}}(x)\subseteq \R\times(-\infty,0],
$$
where
$$
  \mathcal{A}_{\mathcal{U}}(x)=\conv\left\{\binom{x}{0},\binom{x}{x}\right\}.
$$
For such an instance of $(\RSFP)$, as it is $\mathcal{A}_{[0,1]}^{+1}
\left(\R\times (-\infty,0]\right)=(-\infty,0]$, one finds
$$
  \Solv=(-\infty,0]\cap[0,+\infty)=\{0\}.
$$
In the present case, $[\pmfun>0]=R_2\cup R_3$, being $R_1=\varnothing$,
with  $R_2=(-\infty,0)$ and $R_3=(0,+\infty)$.
If $x\in R_2$, one has
$$
  \frac{x-\Pi_{\R_+}(x)}{\dist{x}{\R_+}}=-1,
$$
and
$$
  \Ncone{A_\omega x}{\R\times(-\infty,0]}=
  \left\{\begin{array}{ll}
           \{\nullv\} & \quad\hbox{ if } \omega\in (0,1], \\
           \R\times [0,+\infty) & \quad\hbox{ if } \omega=0.
         \end{array}  \right.
$$
Consequently, as it is
$$
  A_\omega^\top\left(\Ncone{A_\omega x}{\R\times(-\infty,0]}\right)
  =\left\{\begin{array}{ll}
           \{\nullv\} & \quad\hbox{ if } \omega\in (0,1], \\
           \{r\cos\theta\ :\ r\in [0,1],\ \theta\in [0,\pi]\} & \quad\hbox{ if } \omega=0,
         \end{array}  \right.
$$
one obtains
\begin{eqnarray*}
  \left[\clco\bigcup_{\omega\in [0,1]}
  A_\omega^\top(\Ncone{A_\omega x}{\R\times(-\infty,0]}\cap\Uball)\right]+
  {x-\Pi_{\R_+}(x)\over\dist{x}{{\R_+}}} &=&   \clco\{0,\, [-1,1]\}+\{-1\} \\
  &=& [-1,1]+\{-1\}  \\
  &=& [-2,0].
\end{eqnarray*}
It follows that $\ebconglo=\ebcon_2=0$, the positivity
condition on $\ebconglo$ thereby being violated.
On the other hand, since it is
$$
   \exc{\mathcal{A}_{\mathcal{U}}(x)}{\R\times(-\infty,0]}=\left\{\begin{array}{ll}
           0 & \hbox{ if } x\le 0, \\
           x & \hbox{ if } x>0
         \end{array}  \right\}=\max\{x,\, 0\}
$$
and $\dist{x}{\R_+}=-\min\{x,\, 0\}$, it is true that
$$
  \dist{x}{\Solv}=|x|\le \max\{x,\, 0\}-\min\{x,\, 0\},
  \quad\forall x\in\R.
$$
So a global error bound for the current instance of $(\RSFP)$ takes
place, with $\tau=1$, even in the violation of the condition on $\ebconglo$.
\end{example}

\begin{example}     \label{ex:RSFPthm1ok}
Let $n=m=2$ and consider the instance of $(\RSFP)$ defined by the data
$\Omega=[0,1]\subseteq\R$, $C=\R^2$, $Q=-\R^2_+=\R^2_-$, with $\Amap:[0,1]
\longrightarrow{\bf M}_{2\times 2}(\R)$ being given by
$$
  \Amap(\omega)=\left(\begin{array}{cc}
                        \omega\quad & 1-\omega \\
                        1-\omega\quad & \omega \\
                      \end{array}    \right),
                      \qquad \omega\in [0,1].
$$
A robust solution to the corresponding $(\RSFP)$ is thereby
any $x\in\R^2$ satisfying the inequality system
\begin{equation}     \label{in:svipinsys}
  \left\{\begin{array}{ll}
                        \omega x_1 + (1-\omega)x_2 & \le 0 \\
                        (1-\omega)x_1+\omega x_2  & \le 0 \\
                      \end{array}    \right. \qquad \forall\omega\in [0,1].
\end{equation}
In the present case $\mathcal{U}=\Amap([0,1])$ turns out to be the Birkhoff polytope
$\Bpoly_2$ of all the doubly stochastic matrices in ${\bf M}_{2\times 2}(\R)$,
whose extreme elements are the two permutation matrices
$$
  \left(\begin{array}{cc}
                        1\quad & 0 \\
                        0\quad & 1 \\
                      \end{array}  \right)
  \qquad\hbox{ and }\qquad
   \left(\begin{array}{cc}
                        0\quad & 1 \\
                        1\quad & 0 \\
                      \end{array}    \right)
$$
(see, for instance, \cite[Birkhoff-von Neumann Theorem]{Barv02}).
According to the Krein-Milman representation of $\Bpoly_2$, one
can write
$$
  \Amap(\omega)=\omega\left(\begin{array}{cc}
                        1\quad & 0 \\
                        0\quad & 1 \\
                      \end{array}  \right)+
              (1-\omega)\left(\begin{array}{cc}
                        0\quad & 1 \\
                        1\quad & 0 \\
                      \end{array}    \right),
                       \qquad \omega\in [0,1]
$$
and hence it results in
\begin{equation}    \label{eq:aomegaxrep}
   A_\omega x=\left[\omega\left(\begin{array}{cc}
                        1\quad & 0 \\
                        0\quad & 1 \\
                      \end{array}  \right)+
              (1-\omega)\left(\begin{array}{cc}
                        0\quad & 1 \\
                        1\quad & 0 \\
                      \end{array}  \right)\right]
                      \binom{x_1}{x_2}
                      =\omega\binom{x_1}{x_2}+(1-\omega)\binom{x_2}{x_1}.
\end{equation}
Notice that $\binom{x_2}{x_1}$ is the vector symmetric to $x=\binom{x_1}{x_2}$ with
respect to the symmetry ax $x_2=x_1$. Thus, it is plain to see that
\begin{equation}   \label{eq:SVAO01rep}
  \mathcal{A}_{\mathcal{U}}(x)=\left\{ \omega\binom{x_1}{x_2}+(1-\omega)\binom{x_2}{x_1}\ :\
  \omega\in [0,1]\right\}=\conv\left\{\binom{x_1}{x_2},\, \binom{x_2}{x_1}\right\}.
\end{equation}
Clearly, $\mathcal{A}_{\mathcal{U}}$ is single-valued iff its argument
is $x=t(1,1)$, with $t\in\R$.
The solution set $\Solv$ of the problem in consideration coincides
with the solution set of the set-valued inclusion problem
$$
   \hbox{find $x\in\R^2$ such that}\quad
    \mathcal{A}_{\mathcal{U}}(x)\subseteq\R^2_-.
$$
By recalling the representations in (\ref{eq:aomegaxrep}) and (\ref{eq:SVAO01rep})
as well as their plain geometrical meaning (otherwise, by elementary
reasoning on the system (\ref{in:svipinsys})), it is readily seen that
$$
  \Solv=\R^2_-.
$$
Assumption $(a_0)-(a_2)$ are clearly satisfied by the problem data
(in particular, as for $(a_2)$ remember Example \ref{ex:Bpoly}),
while $\Omega$, equipped with the topological structure induced by
$\R$, is separated and compact. So, in order to apply Theorem \ref{thm:psufcond},
it remains to check that condition $\ebconglo>0$ is valid. To do so,
observe first that, since for this problem it is $R_1=R_2=\varnothing$,
then one has $\ebcon_1=\ebcon_2=+\infty$ and therefore $\ebconglo=
\ebcon_3$. To estimate this constant, it is useful to observe that,
since it is
$$
   A_\omega x\in\conv\left\{\binom{x_1}{x_2},\, \binom{x_2}{x_1}\right\},
   \quad\forall \omega\in [0,1],\ \forall x\in\R^2,
$$
then for every $x\in\R^2\backslash\R^2_-$ one obtains
$$
  a_\omega x=\frac{A_\omega x-\Pi_{\R^2_-}(A_\omega x)}{\dist{A_\omega x}{\R^2_-}}
  =\left\{\begin{array}{cc}
            \binom{1}{0} & \hbox{ if } (A_\omega x)_1>0,\ (A_\omega x)_2\le 0\\ \\
              \frac{A_\omega x}{\|A_\omega x\|}& \hbox{ if } (A_\omega x)_1>0,\ (A_\omega x)_2>0\\ \\
            \binom{0}{1} & \hbox{ if } (A_\omega x)_1\le 0,\ (A_\omega x)_2>0,
          \end{array}
  \right.
$$
where it is $A_\omega x=((A_\omega x)_1,\, (A_\omega x)_2)$. It follows
\begin{equation}    \label{eq:aomegaxset}
  \{a_\omega x\in\R^2\ :\ \omega\in [0,1],\ x\in\R^2\backslash\R^2_-\}
  =\Usfer\cap\R^2_+.
\end{equation}
On account of the representation in (\ref{eq:SVAO01rep}), one can
establish that for every $x\in\R^2\backslash\R^2_-$  it holds
$$
  \exc{\mathcal{A}_{\mathcal{U}}(x)}{\R^2_-}=\left\{
  \begin{array}{ll}
    \|x\|, & \quad\hbox{ if } x_1>0,\ x_2>0 \\  \\
    \max\{x_1,\, x_2\} & \quad\hbox{ if } x_1>0,\ x_2\le 0
     \ \hbox{ or }\    x_1\le 0,\ x_2>0,
  \end{array}  \right.
$$
while it is
\begin{eqnarray*}
  \Omega_x &=& \left\{\tilde\omega\in [0,1]\ :\ \dist{A(\tilde\omega,x)}{\R^2_-}=
  \max_{\omega\in [0,1]}\dist{A_\omega x}{\R^2_-}\right\}   \\
  &=& \left\{
  \begin{array}{ll}
    \{0,\, 1\}  & \quad\hbox{ if } x\in\R^2\backslash\R^2_-,\ x_1\ne x_2 \\  \\
    \hbox{[0,1]} & \quad\hbox{ if } x\in\R^2\backslash\R^2_-,\ x_1=x_2.
  \end{array}  \right.
\end{eqnarray*}
The reader should notice that, since it is $\dist{\cdot\,}{\R^2}\equiv 0$,
then
\begin{equation}   \label{eq:pmfunexcSVMAO}
   \pmfun(x)=\exc{\mathcal{A}_{\mathcal{U}}(\cdot)}{\R^2_-},\quad\forall
   x\in\R^2\backslash\R^2_-,
\end{equation}
so $\pmfun$ is actually p.h. and convex, consistently with what established
in Lemma \ref{lem:pmfunprop} (iii) and (iv).

What has been observed about $\Omega_x$ implies that, in computing $\ebcon_3$, one needs
to consider only the extreme elements of $\Amap([0,1])=\Bpoly_2$,
namely
$$
  A_0^\top=A_0=\left(\begin{array}{cc}
                        0\quad & 1 \\
                        1\quad & 0 \\
                      \end{array}    \right)
  \quad\hbox{ and }\quad
   A_1^\top=A_1=\left(\begin{array}{cc}
                        1\quad & 0 \\
                        0\quad & 1 \\
                      \end{array}  \right).
$$
By symmetry of the set $\Usfer\cap\R^2_+$ with respect to
the ax $x_1=x_2$, one sees
$$
  A_0^\top\left(\Usfer\cap\R^2_+\right)=\Usfer\cap\R^2_+,
$$
whereas it is evident that
$$
  A_1^\top\left(\Usfer\cap\R^2_+\right)=\Usfer\cap\R^2_+.
$$
From (\ref{eq:aomegaxset}) it follows that for every $x\in\R^2\backslash\R^2_-$
it holds
$$
  \bigcup_{\omega\in\Omega_x}A_\omega^\top a_\omega x=
  \left\{\binom{a_1(\omega,x)}{a_2(\omega,x)},\, \binom{a_2(\omega,x)}{a_1(\omega,x)}
  \right\},
$$
where $A_\omega x=(a_1(\omega,x),\, a_2(\omega,x))$, and hence
$$
  \bigcup_{x\in\R^2\backslash\R^2_-}\left[\clco
  \bigcup_{\omega\in\Omega_x}A_\omega^\top a_\omega x\right] =
  \clco\left(\Usfer\cap\R^2_+\right)
  =\left\{v=(v_1,v_2)\in\Uball\ :\ v_1+v_2\ge 1\right\}.
$$
Since it is
$$
  \Ncone{x}{\R^2}\cap\Uball=\{\nullv\},
$$
from the above considerations it is possible to achieve
the estimate
$$
  \ebcon_3=\inf\{\|v\|\ :\ v=(v_1,v_2)\in\Uball\ :\ v_1+v_2\ge 1 \}=
  \left\|\binom{1/2}{1/2}\right\|=\frac{1}{\sqrt{2}}>0.
$$
This shows that hypothesis $\ebconglo>0$ is valid for the problem
under study, thereby enabling to apply Theorem \ref{thm:psufcond}.
It has been already seen that $\Solv\ne\varnothing$. Let us check the
validity of the estimate in (\ref{in:ebthm}). It is readily seen that
\begin{eqnarray*}
   \dist{x}{\Solv} &=& \dist{x}{\R^2_-} \\
   &=& \left\{\begin{array}{ll}
    \|x\|, & \quad\hbox{ if } x_1>0,\ x_2>0, \\  \\
    \max\{x_1,\, x_2\} & \quad\hbox{ if } x_1>0,\ x_2\le 0
     \ \hbox{ or }\    x_1\le 0,\ x_2>0,
  \end{array}  \right. \\
  &=& \exc{\mathcal{A}_{\mathcal{U}}(x)}{\R^2_-},\quad\forall x\in\R^2\backslash\R^2_-.
\end{eqnarray*}
By remembering the equality in (\ref{eq:pmfunexcSVMAO}), it is possible
to conclude that for the problem under study the estimate
$$
  \dist{x}{\Solv}\le\sqrt{2}\pmfun(x),\quad\forall x\in\R^2
$$
is actually valid.
\end{example}

In order to deepen the understanding of the condition provided by Theorem \ref{thm:psufcond}
it may be helpful to  consider what error bound condition can be derived from it
in the special case $\card\Omega=1$, as to say in the absence of
uncertainty, with $(\RSFP)$ thereby collapsing to $(\SFP)$.
This is done in the special case in which $C$ and $Q$ are both closed convex cones.
In such setting, clearly it is $\Solv\ne\varnothing$.
A fact which is employed in the proof of the next result is
pointed out in the remark below.

\begin{remark}     \label{rem:0capsumcones}
Let $K_1$ and $K_2$ be two cones such that $K_1\cap (-K_2)=\{\nullv\}$.
Then, for every $r>0$ it must be
$$
   \nullv\notin K_1+\left(K_2\backslash\inte r\Uball\right).
$$
Indeed, if it were $\nullv\in K_1+\left(K_2\backslash\inte r_0\Uball\right)$
for some $r_0>0$, then there would exist $k_1\in K_1$ and $k_2\in
K_2\backslash\{\nullv\}$ such that $\nullv=k_1+k_2$. This would imply
$k_1=-k_2\ne\nullv$ and hence $K_1\cap (-K_2)\ne\{\nullv\}$.
\end{remark}

\begin{theorem}     \label{thm:autrobcase}
With reference to $(\SFP)$, suppose that:
\begin{itemize}
\item[(i)] $C\subseteq\R^n$ and $Q\subseteq\R^m$ are (nonempty)
closed, convex cones;

\item[(ii)] $\krn A^\top\cap\dcone{Q}=\{\nullv\}$;

\item[(iii)] $A^\top\left(\dcone{Q}\right)\cap\left(-\dcone{C}\right)=\{\nullv\}$.
\end{itemize}
Then there exists $\tau>0$ such that the following estimate holds
\begin{equation}  \label{le:erboSolvnrob}
  \dist{x}{\Solv}\le\tau[\dist{Ax}{Q}+\dist{x}{C}],
  \quad\forall x\in\R^n.
\end{equation}
\end{theorem}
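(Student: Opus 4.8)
The plan is to derive the statement from Theorem \ref{thm:psufcond} by viewing $(\SFP)$ as the particular instance of $(\RSFP)$ in which there is no uncertainty. Take $\Omega=\{\omega_0\}$, a (trivially separated and compact) singleton, and $\Amap(\omega_0)=A$, so that $\mathcal{U}=\{A\}$ is compact and convex and $\SVMAO(x)=\{Ax\}$ for every $x\in\R^n$. Then assumptions $(a_0)$--$(a_2)$ reduce to hypothesis (i) together with $A\in\Lin$, hence they hold; the residual function $\pmfun$ of (\ref{eq:pmfundef}) becomes $\pmfun(x)=\exc{\SVMAO(x)}{Q}+\dist{x}{C}=\dist{Ax}{Q}+\dist{x}{C}$, and $\Solv=\{x\in C\ :\ Ax\in Q\}$. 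Since now $\Omega_x=\Omega=\{\omega_0\}$ for every $x$, the operation $\clco\bigcup_{\omega\in\Omega_x}(\cdot)$ appearing in the definitions of $\ebcon_1,\ebcon_2,\ebcon_3$ is vacuous, so it only remains to check that $\ebconglo=\min\{\ebcon_1,\ebcon_2,\ebcon_3\}>0$; Theorem \ref{thm:psufcond} then gives (\ref{le:erboSolvnrob}) with $\tau=\ebconglo^{-1}$ (and $\Solv\ne\varnothing$, as is anyway clear in the conic case).

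Next I would reduce each $\ebcon_i$ to a distance from the origin of a compact set. Since $C$ and $Q$ are closed convex cones, one has $\Ncone{x}{C}\subseteq\dcone{C}$ and $\Ncone{y}{Q}\subseteq\dcone{Q}$, while formula (\ref{eq:subddist}) yields $\frac{y-\Pi_Q(y)}{\dist{y}{Q}}\in\dcone{Q}\cap\Usfer$ when $y\notin Q$ and $\frac{x-\Pi_C(x)}{\dist{x}{C}}\in\dcone{C}\cap\Usfer$ when $x\notin C$. Inspecting the three cases that define $\ebcon_1$ (where $Ax\notin Q$, $x\notin C$), $\ebcon_2$ (where $Ax\in Q$, $x\notin C$) and $\ebcon_3$ (where $Ax\notin Q$, $x\in C$), the generating sets are seen to be contained, respectively, in
\begin{eqnarray*}
 S_1 &=& A^\top(\dcone{Q}\cap\Usfer)+(\dcone{C}\cap\Usfer), \\
 S_2 &=& A^\top(\dcone{Q}\cap\Uball)+(\dcone{C}\cap\Usfer), \\
 S_3 &=& A^\top(\dcone{Q}\cap\Usfer)+(\dcone{C}\cap\Uball).
\end{eqnarray*}
Each of $\dcone{Q}\cap\Usfer$, $\dcone{Q}\cap\Uball$, $\dcone{C}\cap\Usfer$, $\dcone{C}\cap\Uball$ is compact (a closed subset of $\Usfer$ or of $\Uball$), and $(w,u)\mapsto A^\top w+u$ is continuous, so $S_1,S_2,S_3$ are compact. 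Hence $\ebcon_i\ge\dist{\nullv}{S_i}$ for $i=1,2,3$, with the convention $\ebcon_i=+\infty$ when $R_i=\varnothing$.

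The core of the argument is then to show that $\nullv\notin S_i$ for $i=1,2,3$. Suppose $\nullv=A^\top w+u$ with $w\in\dcone{Q}$ and $u\in\dcone{C}$. Then $A^\top w=-u\in A^\top(\dcone{Q})\cap(-\dcone{C})=\{\nullv\}$ by hypothesis (iii) (this is the obstruction recorded in Remark \ref{rem:0capsumcones}), so $u=\nullv$ and $A^\top w=\nullv$; the latter gives $w\in\krn A^\top\cap\dcone{Q}=\{\nullv\}$ by hypothesis (ii). Now in $S_1$ and $S_2$ the second summand $u$ lies on $\Usfer$, so $u=\nullv$ is impossible, whereas in $S_1$ and $S_3$ the vector $w$ lies on $\Usfer$, so $w=\nullv$ is impossible. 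A contradiction arises in every case, hence $\nullv\notin S_i$; by compactness $\dist{\nullv}{S_i}>0$, so $\ebconglo\ge\min\{\dist{\nullv}{S_1},\dist{\nullv}{S_2},\dist{\nullv}{S_3}\}>0$, and Theorem \ref{thm:psufcond} applies, completing the proof.

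The argument is largely bookkeeping and I do not expect a serious obstacle. The step I would single out as the crux is the correct reading of (\ref{eq:subddist}): for each of the three cases one must identify whether the relevant piece of $\partial\pmfun$ sits on the sphere $\Usfer$ or only in the ball $\Uball$, since this dictates whether a contradiction in $S_i$ is forced by (ii) or by (iii) alone — concretely, hypothesis (ii) is genuinely needed only in the case $R_3$, where the $\dcone{C}$-component may vanish while the $\dcone{Q}$-component is normalized. A secondary point is to make sure that ``$\nullv\notin S_i$'' is upgraded to ``$\dist{\nullv}{S_i}>0$'' via compactness of the restricted sum sets $S_i$.
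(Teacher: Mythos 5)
Your proposal is correct and follows essentially the same route as the paper: specialize Theorem \ref{thm:psufcond} to the singleton-$\Omega$ case, note that $\pmfun(x)=\dist{Ax}{Q}+\dist{x}{C}$, and bound $\ebconglo$ from below by the distance from $\nullv$ to compact sum sets of the form $A^\top(\dcone{Q}\cap\,\cdot\,)+(\dcone{C}\cap\,\cdot\,)$, whose exclusion of the origin is forced by hypotheses (ii) and (iii). The only difference is cosmetic: where the paper invokes Remark \ref{rem:0capsumcones} twice and then separates the compact set $A^\top(\dcone{Q}\cap\Usfer)$ from the origin via a radius $r_0$, you verify $\nullv\notin S_i$ by a direct pointwise argument (with the same case analysis of which component sits on $\Usfer$), which is an equally valid and slightly more streamlined bookkeeping of the same idea.
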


\begin{proof}
Observe that, on account of Remark \ref{rem:0capsumcones}, if taking the two cones
$K_1=A^\top\left(\dcone{Q}\right)$ and $K_2=\dcone{C}$, hypothesis (iii)
implies
$$
  \nullv\notin A^\top\left(\dcone{Q}\right)+\left[\dcone{C}\backslash
  \inte\Uball\right],
$$
which, in turn, passing to subsets, entails
\begin{equation}   \label{nin:ATdQdC1}
  \nullv\notin A^\top\left(\dcone{Q}\cap\Uball\right)+\left[\dcone{C}
  \cap\Usfer\right].
\end{equation}
Hypothesis (iii) is also equivalent to
$$
  \dcone{C}\cap\left[-A^\top\left(\dcone{Q}\right)\right]=\{\nullv\}.
$$
Thus, if taking now $K_1=\dcone{C}$ and $K_2=A^\top\left(\dcone{Q}\right)$,
it implies
$$
  \nullv\notin \left[A^\top\left(\dcone{Q}\right)\backslash \inte r\Uball\right]
  +\dcone{C},\quad\forall r>0,
$$
which, again passing to subsets, entails
\begin{equation}  \label{nin:ATdQdCr}
  \nullv\notin \left[A^\top\left(\dcone{Q}\right)\backslash\inte r\Uball\right]
  +\left(\dcone{C}\cap\Uball\right),  \quad\forall r>0.
\end{equation}
Notice that owing to hypothesis (ii) it is $\nullv\notin A^\top\left(\dcone{Q}
\cap\Usfer\right)$, otherwise it would exist $q\in\dcone{Q}$, with $q\ne\nullv$,
such that $A^\top q=\nullv$, so that $q\in\krn A^\top\cap\dcone{Q}\ne\{\nullv\}$.
Therefore, since $A^\top\left(\dcone{Q}\cap\Usfer\right)$ is compact, then for
some $r_0>0$ it must be
$$
  A^\top\left(\dcone{Q}\cap\Usfer\right)\subseteq A^\top\left(\dcone{Q}\right)\backslash\inte
  r_0\Uball.
$$
By combining the last inclusion with the exclusion in (\ref{nin:ATdQdCr}),
one obtains
\begin{equation}   \label{nin:ATdQdC2}
  \nullv\notin A^\top\left(\dcone{Q}\cap\Usfer\right)+\left[\dcone{C}
  \cap\Uball\right].
\end{equation}
Notice that both the sets
$$
  A^\top\left(\dcone{Q}\cap\Uball\right)+\left[\dcone{C}
  \cap\Usfer\right] \quad\hbox{ and }\quad
  A^\top\left(\dcone{Q}\cap\Usfer\right)+\left[\dcone{C}
  \cap\Uball\right]
$$
are compact, as a sum of two compact sets. As a result, their union
$$
  D=\left\{A^\top\left(\dcone{Q}\cap\Uball\right)+\left[\dcone{C}
  \cap\Usfer\right]\right\}\bigcup
  \left\{A^\top\left(\dcone{Q}\cap\Usfer\right)+\left[\dcone{C}
  \cap\Uball\right]\right\}
$$
is still compact and, by virtue of the exclusion in (\ref{nin:ATdQdC1}) and
(\ref{nin:ATdQdC2}), one finds that $\nullv\notin D$.  Consequently,
it must be
\begin{equation}  \label{in:tauinfDpos}
  \tau=\inf\{\|v\|\ :\ v\in D\}>0.
\end{equation}
Now, on the basis of the above considerations, let us show that,
in the particular case in which $\card\Omega=1$, the condition $\ebconglo>0$
is actually satisfied.
In the present event, it is $\SVMAO(x)=\{Ax\}$ and hence $\exc{\SVMAO(x)}{Q}=
\dist{Ax}{Q}$. Consequently, the partition $\{R_i\subseteq\R^n\ :\ i=1,2,3\}$ of $[\pmfun>0]$,
takes the form
$$
  R_1=\{x\in\R^n\ :\  \dist{Ax}{Q}>0,\quad \dist{x}{C}>0\},
$$
$$
  R_2=\{x\in\R^n\ :\  \dist{Ax}{Q}=0,\quad \dist{x}{C}>0\},
$$
and
$$
  R_3=\{x\in\R^n\ :\  \dist{Ax}{Q}>0,\quad \dist{x}{C}=0\}.
$$
Observe that if it is $x\in R_1$, since it is $\Ncone{\Pi_Q(Ax)}{Q}
\subseteq\dcone{Q}$, then
$$
  A^\top\frac{Ax-\Pi_Q(Ax)}{\dist{Ax}{Q}}+
  {x-\Pi_C(x)\over\dist{x}{C}}\in
  A^\top\left(\dcone{Q}\cap\Uball\right)+\left[\dcone{C}
  \cap\Usfer\right];
$$
if it is $x\in R_2$, then
$$
  A^\top\left(\Ncone{Ax}{Q}\cap\Uball\right)+
  {x-\Pi_C(x)\over\dist{x}{C}}\subseteq
  A^\top\left(\dcone{Q}\cap\Uball\right)+\left[\dcone{C}
  \cap\Usfer\right];
$$
if it is $x\in R_3$, then
$$
  A^\top\frac{Ax-\Pi_Q(Ax)}{\dist{Ax}{Q}}+
 \left(\Ncone{x}{C}\cap\Uball\right)\subseteq
  A^\top\left(\dcone{Q}\cap\Usfer\right)+\left[\dcone{C}
  \cap\Uball\right].
$$
By remembering the definition of $\ebcon_1$, $\ebcon_2$, $\ebcon_3$,
and $\ebconglo$, from the above inclusions it is possible to deduce
$$
  \ebconglo\ge\tau>0.
$$
Notice furthermore that, by hypothesis (i) and $\card\Omega=1$,
assumptions $(a_0)-(a_2)$ are trivially fulfilled.
Thus the thesis follows from Theorem \ref{thm:psufcond}.
\hfill $\square$
\end{proof}

\begin{remark}
The reader should notice that hypothesis (ii) of Theorem \ref{thm:autrobcase}
is fulfilled, in particular, whenever $A$ is onto ($n\ge m$ and $A$
with full rank), inasmuch as in this case one has
$\krn(A^\top)=(A(\R^n))^\perp=\{\nullv\}$.

Hypothesis (iii) of Theorem \ref{thm:autrobcase} is fulfilled,
in particular, whenever $(\SFP)$ is unconstrained, i.e. $C=\R^n$.
\end{remark}

\vskip1cm


\section{Error bounds in the polyhedral case}   \label{Sect:4}

In order for complementing the result exposed in Section \ref{Sect:3},
the present section focuses on global error bound conditions
in the special case in which the data defining a $(\RSFP)$ happen
to be polyhedral. To do so, throughout the current section, assumption
$(a_0)$ is enhanced as follows:
\begin{itemize}
  \item[$(a_0^+)$] $\varnothing\ne C\subseteq\R^n$ is closed and convex, and
   $\{\nullv\}\ne Q\subsetneqq\R^m$ is a closed, convex, pointed cone.
\end{itemize}

The following concept, introduced in \cite{Uder19}, turned out to be
effective in addressing solvability and stability issues within the
context of the set-valued inclusion problems.

\begin{definition}[{\bf $Q$-increase property}]    \label{def:Cincrpropdef}
Let $\mathcal{F}:\R^n\rightrightarrows\R^m$ be a set-valued
map, let $\{\nullv\}\ne Q\subsetneqq\R^m$ be a closed, convex cone, and let
$x_0\in\dom\mathcal{F}$. The map $\mathcal{F}$ is said to be {\it (metrically) $Q$-increasing} at
$x_0$ if there exist $\alpha>1$ and $\delta>0$ such that
\begin{equation}    \label{def:Cincrprop}
   \forall r\in (0,\delta]\quad\ \exists z\in\ball{x_0}{r}\ :\
   \ball{\mathcal{F}(z)}{\alpha r}\subseteq \ball{\mathcal{F}(x_0)+Q}{r}.
\end{equation}
The value
\begin{equation}    \label{def:Cincrbxbo}
   \inc{\mathcal{F}}{Q}{x_0}=\sup\{\alpha>1:\ \exists\delta>0 \hbox{ for which (\ref{def:Cincrprop})
   holds\,}\}
\end{equation}
is called {\it exact bound of $Q$-increase} of $\mathcal{F}$ at $x_0$. If $\mathcal{F}$
is $Q$-increasing at each $x_0\in\R^n$, then it is said to be $Q$-increasing on $\R^n$.
\end{definition}

Examples of entire classes of set-valued maps enjoying the above property
can be found in \cite{Uder19,Uder24}, along with further comments
about connections with the decrease principle in variational analysis.

\begin{remark}   \label{rem:unexCincrprop}
In the sequel, the fact will be exploited that, whenever
a set-valued map $\mathcal{F}$ is $Q$-increasing at $x_0$ while it is
$\mathcal{F}(x_0)\not\subseteq Q$, then, if $\alpha$ and $\delta$ are as in Definition
\ref{def:Cincrpropdef}, for every $r\in (0,\delta]$ the inclusion in
(\ref{def:Cincrprop}) must be necessarily satisfied by $z\in\ball{x_0}{r}
\backslash\{x_0\}$ (see \cite[Remark 4]{Uder24}).
\end{remark}

Let us introduce below a qualification condition on which the current approach
to the study of error bounds relies:
with reference to the set-valued inclusion problem (\ref{eq:defSVMAO}),
the map $\SVMAO:\R^n\rightrightarrows\R^m$
is said to satisfy a {\it Slater-like qualification condition} if
$$
  \inte Q\ne\varnothing \quad\hbox{ and }\quad
  \exists u\in\Uball\ :\ \SVMAO(u)\subseteq\inte Q.
  \leqno (\SCQ)
$$
The next lemma states that the qualification condition $(\SCQ)$ provides
a sufficient condition for the map $\SVMAO$ to be $Q$-increasing on $\R^n$
and, at the same time, an uniform estimate from below of its $Q$-increase
exact bound.

\begin{lemma}    \label{lem:SCQincr}
Let $\SVMAO$ be defined as in (\ref{eq:defSVMAO}). Under the assumptions
$(a_0^+)-(a_2)$, if (\SCQ) holds then $\SVMAO$ is $Q$-increasing on $\R^n$
and there exists $\alpha_{\SVMAO}>1$ such that
$$
   \inc{\SVMAO}{Q}{x}\ge\alpha_{\SVMAO},\quad\forall x\in\R^n.
$$
\end{lemma}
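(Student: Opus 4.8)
The plan is to establish the $Q$-increase property directly from the definition in \eqref{def:Cincrprop}, using the Slater point $u\in\Uball$ as a ``direction of improvement'' that works uniformly for all base points $x$. Fix $x\in\R^n$ and a radius $r>0$. Since $\SVMAO$ is positively homogeneous and concave on $\R^n$ (Lemma \ref{lem:SVMAOprop}(iv)), for $t\in(0,1)$ one has
$$
  \SVMAO\bigl((1-t)x+tu\bigr)\subseteq (1-t)\SVMAO(x)+t\SVMAO(u)
  \subseteq (1-t)\SVMAO(x)+t\,\inte Q,
$$
using $(\SCQ)$ in the last step. The candidate perturbed point will be $z=(1-t)x+tu$ with $t$ chosen proportional to $r$ (so that $z\in\ball{x}{r}$, which forces $t\le r/\|x-u\|$; one takes $t=r/\theta$ for a suitable $\theta$ depending only on a bound for $\|u\|$, say $\theta=1+\|x\|$—but see the remark on uniformity below).

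\textbf{Key steps.} First I would make $(\SCQ)$ quantitative: since $\SVMAO(u)$ is compact (Lemma \ref{lem:SVMAOprop}(ii)) and contained in the open set $\inte Q$, there is $\varepsilon>0$ with $\ball{\SVMAO(u)}{\varepsilon}\subseteq Q$, in fact $\ball{\SVMAO(u)}{\varepsilon}\subseteq\inte Q$; moreover $\SVMAO(u)+\varepsilon\Uball\subseteq Q$ and, because $Q$ is a convex cone, $\SVMAO(u)+Q+\varepsilon\Uball\subseteq Q$. Second, I would compare $\SVMAO(z)$ with $\SVMAO(x)+Q$: from the concavity inclusion above and $(1-t)\SVMAO(x)\subseteq \SVMAO(x)+Q$ would not directly help, so instead write
$$
  \ball{\SVMAO(z)}{\alpha r}\subseteq (1-t)\SVMAO(x)+t\,\SVMAO(u)+\alpha r\,\Uball,
$$
and then absorb the ``bad'' part: one wants $(1-t)\SVMAO(x)+\alpha r\,\Uball\subseteq \SVMAO(x)+Q+(\text{something of size }r)$ and $t\,\SVMAO(u)$ to be swallowed by $t\varepsilon\Uball$-worth of room inside $Q$. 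Here the homogeneity is essential: $t\,\SVMAO(u)=\SVMAO(tu)$ and $t\,\SVMAO(u)+t\,Q+t\varepsilon\Uball\subseteq Q$. The third step is the bookkeeping on radii: writing $t=cr$ for a constant $c$ to be fixed, one needs $(1-cr)\SVMAO(x)\subseteq\SVMAO(x)+Q+O(r)\Uball$—which follows because $\SVMAO(x)$ is bounded by $\beta\|x\|$ (the Lipschitz bound from Lemma \ref{lem:SVMAOprop}(v) with $\SVMAO(\nullv)=\{\nullv\}$), so $(1-cr)\SVMAO(x)\subseteq\SVMAO(x)+cr\beta\|x\|\Uball\subseteq\SVMAO(x)+Q+cr\beta\|x\|\Uball$. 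Then choosing $\alpha$ slightly above $1$ and $\delta$ small, the inclusion
$$
  \ball{\SVMAO(z)}{\alpha r}\subseteq\ball{\SVMAO(x)+Q}{r}
$$
will hold for all $r\in(0,\delta]$.

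\textbf{Main obstacle.} The real difficulty is \emph{uniformity in $x$}: the naive estimates above produce an $\alpha$ and a $\delta$ that degrade as $\|x\|\to\infty$ (because $z=(1-t)x+tu$ drifts away from $x$ at rate $\|x-u\|$, and the slack $cr\beta\|x\|$ in step three grows with $\|x\|$). To get a single $\alpha_{\SVMAO}>1$ valid for all $x\in\R^n$, I expect one must exploit homogeneity once more to reduce to the unit sphere: the $Q$-increase property is itself invariant under the positive-homogeneous scaling of $\SVMAO$, so it suffices to verify the estimate at points of $\bd\ball{\nullv}{1}$ (or to note that $\inc{\SVMAO}{Q}{\lambda x}=\inc{\SVMAO}{Q}{x}$ for $\lambda>0$), and then invoke compactness of the sphere together with continuity of the relevant data. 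Handling $x=\nullv$ separately (where $\SVMAO(\nullv)=\{\nullv\}$, so $(\SCQ)$ gives the conclusion immediately) and patching the scaling argument cleanly is the step I would budget the most care for; everything else is the routine ``open set contains a compact set, hence contains an $\varepsilon$-enlargement'' manipulation combined with the convex-cone arithmetic $Q+Q=Q$, $\lambda Q=Q$.
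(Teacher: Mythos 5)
There is a genuine gap, and it is not only the uniformity issue you flag at the end. Your perturbed point is the convex combination $z=(1-t)x+tu$, so after using concavity and homogeneity you are left comparing $(1-t)\SVMAO(x)+t\SVMAO(u)$ with the target $\SVMAO(x)+Q+r\Uball$. The only way you bridge $(1-t)\SVMAO(x)$ back to $\SVMAO(x)$ is the crude estimate $(1-t)\SVMAO(x)\subseteq\SVMAO(x)+t\beta\|x\|\Uball$, and this error must be absorbed by the Slater slack: with $t=cr$ and $\SVMAO(u)+\eta\Uball\subseteq Q$, the inclusion $\ball{\SVMAO(z)}{\alpha r}\subseteq\ball{\SVMAO(x)+Q}{r}$ forces $\alpha r+cr\beta\|x\|\le r+cr\eta$, i.e. $\alpha\le 1+c\,(\eta-\beta\|x\|)$. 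Since everything is homogeneous of degree one in $r$, shrinking $\delta$ does not help, and no $\alpha>1$ exists unless $\beta\|x\|<\eta$. In particular the argument does not close even at points of the unit sphere (there is no reason to have $\eta>\beta$), so the reduction to $\Usfer$ by scale invariance of $\inc{\SVMAO}{Q}{\cdot}$ cannot rescue it; moreover the final ``compactness of the sphere plus continuity'' step would require a lower-semicontinuity property of the exact bound of $Q$-increase that you neither state nor prove.

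The fix is to perturb additively rather than by convex combination, which is what the paper does: take $z=x+ru$ with $u\in\Uball$ from $(\SCQ)$. Then linearity of each $A_\omega$ (equivalently, concavity plus positive homogeneity) gives the exact inclusion $\SVMAO(x+ru)\subseteq\SVMAO(x)+r\SVMAO(u)$ with no $\|x\|$-dependent error term, and hence
\begin{equation*}
\ball{\SVMAO(z)}{(\eta+1)r}\subseteq\SVMAO(x)+r\bigl(\SVMAO(u)+\eta\Uball\bigr)+r\Uball\subseteq\SVMAO(x)+rQ+r\Uball=\ball{\SVMAO(x)+Q}{r},
\end{equation*}
so $\alpha_{\SVMAO}=\eta+1>1$ works for every $x\in\R^n$ and every $r>0$ (any $\delta$), which is exactly the uniform bound claimed. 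Your quantitative reformulation of $(\SCQ)$ via compactness of $\SVMAO(u)$ and the cone arithmetic $Q+Q=Q$, $\lambda Q=Q$ are the right ingredients; the decisive missing idea is that the perturbation must leave the term $\SVMAO(x)$ untouched.
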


\begin{proof}
Let $u\in\R^n$ be as in $(\SCQ)$. According to Lemma \ref{lem:SVMAOprop}(ii)
the set $\SVMAO(u)$ is compact. As a consequence, there exists $\eta>0$ such that
\begin{equation}     \label{in:AUetaB}
  \SVMAO(u)+\eta\Uball\subseteq Q.
\end{equation}
Take an arbitrary $x\in\R^n$ and $r>0$ and set $z=x+ru$. Then, it is $z\in\ball{x}{r}$
and, by the $Q$-concavity and the positive homogeneity property of
$\SVMAO$ ensured by Lemma \ref{lem:SVMAOprop}(iv), on the account of the
inclusion in (\ref{in:AUetaB})
one finds
\begin{eqnarray*}
  \ball{\SVMAO(z)}{(\eta+1)r} &=& \SVMAO(x+ru)+(\eta+1)r\Uball
  \subseteq \SVMAO(x)+r\SVMAO(u)+\eta r\Uball+r\Uball \\
   &=& \SVMAO(x)+r\left(\SVMAO(u)+\eta\Uball\right)+r\Uball
   \subseteq \SVMAO(x)+rQ+r\Uball \\
   &=& \ball{\SVMAO(x)+Q}{r}.
\end{eqnarray*}
Thus, by setting $\alpha_{\SVMAO}=\eta+1>1$, the above inclusion shows
that $\SVMAO$ fulfils the condition in (\ref{def:Cincrprop}), with
$\alpha=\alpha_{\SVMAO}$ and any $\delta>0$. By recalling the definition in
(\ref{def:Cincrbxbo}), one can conclude that $\inc{\SVMAO}{Q}{x}\ge\alpha_{\SVMAO}$,
thereby completing the proof.
\hfill $\square$
\end{proof}

The role of the qualification condition $(\SCQ)$ is explained
by the next proposition. For its proof, it is convenient to recall
the following basic properties of the excess operator: let $Q\subseteq\R^m$
be a closed, convex cone; then
\begin{itemize}
  \item[$(p_1)$] $\forall S\subseteq\R^m$, $\exc{S+Q}{Q}=\exc{S}{Q}$;

  \vskip.5cm

  \item[$(p_2)$] $\forall r>0$ and $\forall S\subseteq\R^m$, such that
  it is $\exc{S}{Q}>0$, it holds $\exc{\ball{S}{r}}{Q}=\exc{S}{Q}+r$
\end{itemize}
(for their proofs, the reader is referred to \cite[Remark 2.1(iv)]{Uder19}
and \cite[Lemma 2.2]{Uder19}, respectively).

\begin{proposition}     \label{pro:ebpolySCQ}
With reference to $(\RSFP)$, under the assumptions
$(a_0^+)-(a_2)$ suppose that:
\begin{itemize}
  \item[(i)]  $C$ and $\SVMAO^{+1}(Q)$ are polyhedral;

  \item[(ii)] the qualification condition (\SCQ) is satisfied.
\end{itemize}
Then, if $\Solv\ne\varnothing$, there exists $\tau>0$ such that the
following estimate holds
$$
  \dist{x}{\Solv}\le \tau\pmfun(x),\quad\forall x\in\R^n.
$$
\end{proposition}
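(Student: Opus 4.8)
The plan is to separate the two terms of the residual $\pmfun=\exc{\SVMAO(\cdot)}{Q}+\dist{\cdot}{C}$, to obtain a global error bound for each of them independently, and then to glue them together by a subtransversality argument. Concretely, I would first establish the existence of $c>0$ with
\[
  \dist{x}{\SVMAO^{+1}(Q)}\le c\,\exc{\SVMAO(x)}{Q},\qquad\forall x\in\R^n,
\]
i.e.\ a global error bound for the pure set-valued inclusion $\SVMAO(x)\subseteq Q$. Then, since $C$ and $\SVMAO^{+1}(Q)$ are polyhedral by hypothesis (i) (hence closed and convex) and $\Solv=C\cap\SVMAO^{+1}(Q)\ne\varnothing$, the polyhedral instance of subtransversality recalled in Section \ref{Sect:2} (where inequality (\ref{in:defsubtransv}) holds globally) provides $\kappa>0$ with $\dist{x}{\Solv}\le\kappa[\dist{x}{C}+\dist{x}{\SVMAO^{+1}(Q)}]$ on all of $\R^n$; combining the two estimates and using $a+c\,b\le\max\{1,c\}(a+b)$ for $a,b\ge0$ yields the claim with $\tau=\kappa\max\{1,c\}$.

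The heart of the argument is the first step, and here I would exploit the $Q$-increase machinery just developed. Put $\psi:=\exc{\SVMAO(\cdot)}{Q}$; as in the proof of Lemma \ref{lem:pmfunprop} this function is convex and globally Lipschitz, hence continuous on the complete space $\R^n$, and as in Proposition \ref{pro:Solvchar} one has $[\psi\le0]=\psi^{-1}(0)=\SVMAO^{+1}(Q)$. By Lemma \ref{lem:SCQincr}, the condition $(\SCQ)$ forces $\SVMAO$ to be $Q$-increasing on $\R^n$ with $\inc{\SVMAO}{Q}{x}\ge\alpha_{\SVMAO}$ for every $x$; set $\eta:=\alpha_{\SVMAO}-1>0$ (the constant with $\SVMAO(u)+\eta\Uball\subseteq Q$ arising in its proof). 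Fixing $x$ with $\psi(x)>0$ (so $\SVMAO(x)\not\subseteq Q$) and $\alpha\in(1,\eta+1)$, for small $r$ Definition \ref{def:Cincrpropdef} and Remark \ref{rem:unexCincrprop} give $z\in\ball{x}{r}\backslash\{x\}$ with $\ball{\SVMAO(z)}{\alpha r}\subseteq\ball{\SVMAO(x)+Q}{r}$; taking the excess over $Q$ of both sides and applying property $(p_1)$ on the right and property $(p_2)$ on both sides — legitimate because $\exc{\SVMAO(x)+Q}{Q}=\psi(x)>0$ and, for all sufficiently small $r$, $\psi(z)>0$ as well (otherwise $z\in[\psi\le0]$ would force the closed set $[\psi\le0]$ to contain $x$, against $\psi(x)>0$) — yields $\psi(z)+\alpha r\le\psi(x)+r$, hence $\bigl(\psi(x)-\psi(z)\bigr)/\|z-x\|\ge\alpha-1$ with $\|z-x\|\le r\to0$. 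Letting $\alpha\to(\eta+1)^{-}$ gives $\stsl{\psi}(x)\ge\eta$ for every $x\in[\psi>0]$, and the decrease principle \cite[Theorem 2.8]{AzeCor06} (invoked exactly as in Proposition \ref{pro:sovebconvex}) delivers $\dist{x}{\SVMAO^{+1}(Q)}\le\eta^{-1}\psi(x)$ on $\R^n$, so $c=\eta^{-1}$ works. Alternatively, one may compute $\partial\psi(x)$ by the Ioffe-Tikhomirov and chain rules as in the proof of Theorem \ref{thm:psufcond} and check $\|v\|\ge\eta$ for all $v\in\partial\psi(x)$, $x\in[\psi>0]$, from $\langle A_\omega^\top n,u\rangle=\langle n,A_\omega u\rangle\le-\eta$ for $n\in\dcone{Q}\cap\Usfer$ (itself a consequence of $\SVMAO(u)+\eta\Uball\subseteq Q$), then apply Proposition \ref{pro:sovebconvex} directly.

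I expect the delicate point to be the strong-slope estimate for $\psi$: one must carefully handle the borderline case in which the $Q$-increase ``descent point'' $z$ already lands in $\SVMAO^{+1}(Q)$, and insert the excess identities $(p_1)$ and $(p_2)$ at exactly the right places; everything else is bookkeeping. It is worth noting that polyhedrality enters only through the second step — to make subtransversality of $C$ and $\SVMAO^{+1}(Q)$ global — whereas $(\SCQ)$ alone (via $Q$-increase) governs the set-valued-inclusion error bound; and the hypothesis $\Solv\ne\varnothing$ is needed precisely so that the pair $C$, $\SVMAO^{+1}(Q)$ admits a common point at which to invoke subtransversality.
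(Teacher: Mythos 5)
Your proposal is correct and follows essentially the same route as the paper: a global error bound for the pure inclusion $\SVMAO(x)\subseteq Q$ derived from the $Q$-increase property of Lemma \ref{lem:SCQincr} together with the excess identities $(p_1)$ and $(p_2)$ (using Remark \ref{rem:unexCincrprop} to guarantee $z\ne x$), then glued to the constraint $C$ via global subtransversality of the polyhedral pair $C$, $\SVMAO^{+1}(Q)$ at a point of the nonempty set $\Solv$. The only immaterial difference is that you package the resulting descent inequality as a strong-slope lower bound and invoke the Az\'e--Corvellec decrease principle, whereas the paper states it as a Caristi-like condition and applies the Ekeland/Bishop--Phelps principle directly -- two equivalent formulations of the same variational step.
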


\begin{proof}
By virtue of Lemma \ref{lem:SCQincr}, hypothesis (ii) guarantees that
$\SVMAO$ is $Q$-increasing on $\R^n$. Let $\alpha\in (1,\alpha_{\SVMAO})$, where
$\alpha_{\SVMAO}$ as in  Lemma \ref{lem:SCQincr}.
Let us show that this property yields the following implication, which is
a sort of Caristi-like property, as is meant in the context of variational analysis (see,
for instance, \cite{ArZhZh19} and \cite[Chapter I.2]{GraDug03}):
\begin{eqnarray}  \label{in:Caristiprop}
  &\forall x_0\not\in \SVMAO^{+}(Q)\quad\exists \hat{x}\in\R^n
 \backslash\{x_0\}\ :\ \nonumber \\
  &\exc{\SVMAO(\hat{x})}{Q}+(\alpha-1)\|\hat{x}-x_0\|
  \le\exc{\SVMAO(x_0)}{Q}.
\end{eqnarray}
Indeed, notice that by continuity of function $x\mapsto\exc{\SVMAO(x)}{Q}$
(remember the argument in the proof of Lemma \ref{lem:pmfunprop}(iii))
and the fact that $\exc{\SVMAO}{x_0}>0$, there exists $\delta>0$ such that
$$
  \exc{\SVMAO}{x}>0,\quad\forall x\in\ball{x_0}{\delta}.
$$
By the $Q$-increase property of $\SVMAO$ at $x_0$, if taking $r\in (0,\delta)$,
there exists $z\in\ball{x_0}{r}$, such that
\begin{equation}   \label{in:Qincrx_0z}
  \ball{\SVMAO(z)}{\alpha r}\subseteq \ball{\SVMAO(x_0)+Q}{r}.
\end{equation}
It is to be noticed that, since $\SVMAO(x_0)\nsubseteq Q$, then, according to
what was observed in Remark \ref{rem:unexCincrprop}, it must be $z\in\R^n\backslash\{x_0\}$.
By exploiting the properties $(p_1)$ and $(p_2)$ of the excess operator
and the inclusion in (\ref{in:Qincrx_0z}),
which is valid because $z\in\ball{x_0}{\delta}$ and hence $\exc{\SVMAO(z)}{Q}>0$,
one obtains
\begin{eqnarray*}
  \exc{\SVMAO(z)}{Q} &=& \exc{\ball{\SVMAO(z)}{\alpha r}}{Q}-\alpha r \\
   &\le & \exc{\ball{\SVMAO(x_0)+Q}{r}}{Q}-\alpha r \\
   &=& \exc{\SVMAO(x_0)+Q}{Q}+r-\alpha r \\
   &=& \exc{\SVMAO(x_0)}{Q}-(\alpha-1)r.
\end{eqnarray*}
By taking into account that $\|z-x_0\|\le r$, from the above inequality it
follows
$$
   \exc{\SVMAO(z)}{Q}+(\alpha-1)\|z-x_0\|\le \exc{\SVMAO(x_0)}{Q},
$$
which shows the validity of (\ref{in:Caristiprop}).
Such a Caristi-like condition, by virtue of the Bishop-Phelps/Ekeland
variational principle (see, for instance, \cite[Chapter I.2]{GraDug03}), implies
the existence of $x_\alpha\in\R^n$ such that
\begin{equation}    \label{eq:xalphainSolv}
  \exc{\SVMAO(x_\alpha)}{Q}=0
\end{equation}
and
\begin{equation}    \label{in:distxalphax0}
  \|x_\alpha-x_0\|\le \frac{\exc{\SVMAO(x_0)}{Q}}{\alpha-1}.
\end{equation}
By combining (\ref{eq:xalphainSolv}) and (\ref{in:distxalphax0}),
one obtains
$$
  \dist{x_0}{\SVMAO^{+1}(Q)}\le\|x_\alpha-x_0\|\le
   \frac{\exc{\SVMAO(x_0)}{Q}}{\alpha-1}.
$$
By the arbitrariness of $x_0$, the above argument enables
to achieve the following estimate
\begin{equation}    \label{in:ebSVMAO+1Q}
    \dist{x}{\SVMAO^{+1}(Q)}\le
   \frac{\exc{\SVMAO(x)}{Q}}{\alpha-1},\quad\forall x\in\R^n,
\end{equation}
in the case $x\in\SVMAO^{+1}(Q)$ the above inequality being
trivially true.
Now, by virtue of hypothesis (i), the metric reformulation of
the global subtransversality of pairs of polyhedral sets given
in (\ref{in:defsubtransv}) allows one to write for a proper $\kappa>0$
\begin{eqnarray*}
  \dist{x}{\Solv} &=& \dist{x}{C\cap\SVMAO^{+1}(Q)}   \\
  &\le &\kappa \left[\dist{x}{C}+\dist{x}{\SVMAO^{+1}(Q)}\right],
  \quad\forall x\in\R^n.
\end{eqnarray*}
Thus, by taking into account the estimate in (\ref{in:ebSVMAO+1Q}),
it results in
$$
 \dist{x}{\Solv}\le\kappa\left[\dist{x}{C}+
 \frac{\exc{\SVMAO(x)}{Q}}{\alpha-1}\right]\le\tau
 \pmfun(x),\quad\forall x\in\R^n,
$$
provided that one takes $\tau\ge\kappa\max\{1,\, (\alpha-1)^{-1}\}$.
This completes the proof.
\hfill $\square$
\end{proof}

The hypotheses of Proposition \ref{pro:ebpolySCQ} are formulated only
partially in terms of problem data. In order to derive from it a more
satisfactory condition for error bounds, one can refer to a quantitative
form of regularity for linear maps. To this aim, recall that a linear map,
represented by $A\in\Lin$, is said to be covering (at a linear rate)
if there exists $\sigma>0$ such that $A\Uball\supseteq\sigma\Uball$,
in which case the quantity
$$
  \sur{A}=\sup\{\sigma>0\ :\ A\Uball\supseteq\sigma\Uball\}
$$
is called exact covering bound of $A$. Roughly speaking, it provides
a measure of how much $A$ is surjective, with $\sur{A}=0$ signaling
the failure of such a property. Historically, this particular manifestation of
the regularity of a map played a crucial role in understanding the deep connections
between different topics of variational analysis (see \cite{Ioff17,Mord06}).
For the purposes of the present analysis, it suffices to recall that
the following characterization of the exact covering bound holds:
\begin{equation}     \label{eq:surcharcov}
  \sur{A}=\dist{\nullv}{A^\top\Usfer}=
  \min_{u\in\Usfer}\|A^\top u\|
\end{equation}
(see \cite[Corollary 1.58]{Mord06}).

\begin{theorem}[Error bound under polyhedral assumptions]    \label{thm:ebpolysufcond}
With reference to $(\RSFP)$, under the assumptions
$(a_0^+)-(a_2)$, suppose that:
\begin{itemize}
  \item[(i)]  $C$, $Q$ and $\Amap(\Omega)=\mathcal{U}$ are polyhedral;

  \item[(ii)] $\sur{\SVMAO}=\displaystyle\inf_{\omega\in\Omega}\sur{A_\omega}>0$;

  \item[(iii)] $\inte\left(\displaystyle\bigcap_{\omega\in\Omega}A_\omega^{-1}(Q)\right)
                \ne\varnothing$.
\end{itemize}
Then, if $\Solv\ne\varnothing$, there exists $\tau>0$ such that the
following estimate holds
\begin{equation}    \label{in:ebpolypro}
  \dist{x}{\Solv}\le \tau\pmfun(x),\quad\forall x\in\R^n.
\end{equation}
\end{theorem}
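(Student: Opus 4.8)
The plan is to deduce the conclusion from Proposition~\ref{pro:ebpolySCQ} by verifying its two hypotheses under the stronger (but data-expressed) assumptions $(i)$--$(iii)$ of the present theorem. The first reduction is routine: assumption $(i)$ here assumes $C$, $Q$ and $\mathcal{U}=\Amap(\Omega)$ are all polyhedral, and we must upgrade this to hypothesis $(i)$ of Proposition~\ref{pro:ebpolySCQ}, namely that $C$ and $\SVMAO^{+1}(Q)$ are polyhedral. Since $\SVMAO^{+1}(Q)=\bigcap_{\omega\in\Omega}A_\omega^{-1}(Q)$ by \eqref{eq:Solvdef}, and since $\mathcal{U}$ is a polytope, it has finitely many extreme points $A_{\omega_1},\dots,A_{\omega_k}$; I would first argue that the inclusion $\SVMAO(x)\subseteq Q$ for a polyhedral cone $Q$ is equivalent to $A_{\omega_j}x\in Q$ for $j=1,\dots,k$ (because $\SVMAO(x)$ is the convex hull of $\{A_\omega x:A_\omega\in\mathcal U\}$, which is generated by the images of the extreme points, and $Q$ convex). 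Hence $\SVMAO^{+1}(Q)=\bigcap_{j=1}^k A_{\omega_j}^{-1}(Q)$ is a finite intersection of preimages of a polyhedron under linear maps, therefore polyhedral. Combined with $C$ polyhedral, hypothesis $(i)$ of Proposition~\ref{pro:ebpolySCQ} holds.

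The substantive step is to show that $(ii)$ and $(iii)$ of the present theorem together imply the Slater-like qualification condition $(\SCQ)$: $\inte Q\ne\varnothing$ and there exists $u\in\Uball$ with $\SVMAO(u)\subseteq\inte Q$. Since $Q$ is a pointed closed convex cone with $\{\nullv\}\ne Q\subsetneqq\R^m$ and, by $(iii)$, $\bigcap_{\omega}A_\omega^{-1}(Q)$ has nonempty interior, pick $\bar x$ in that interior; I would then show $\SVMAO(\bar x)\subseteq\inte Q$. The idea: $\bar x\in\inte A_\omega^{-1}(Q)$ for every $\omega$ (the interior of the intersection is contained in each), so $A_\omega\bar x\in A_\omega(\inte A_\omega^{-1}(Q))$; using the covering estimate $(ii)$, each $A_\omega$ is an open map with covering bound bounded below by $\sur{\SVMAO}>0$, hence maps interior points of $A_\omega^{-1}(Q)$ into $\inte Q$ with a uniform ``margin''. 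More precisely, fixing a ball $\oball{\bar x}{\rho}\subseteq\bigcap_\omega A_\omega^{-1}(Q)$, covering gives $A_\omega\bar x+\sur{A_\omega}\rho\,\Uball\subseteq A_\omega(\oball{\bar x}{\rho})\subseteq Q$, so $A_\omega\bar x$ lies in $Q$ together with a ball of radius $\ge(\inf_\omega\sur{A_\omega})\rho>0$; this forces $A_\omega\bar x\in\inte Q$ for all $\omega$, i.e. $\SVMAO(\bar x)\subseteq Q$ and each of its (finitely many extreme) generators is an interior point, whence by convexity $\SVMAO(\bar x)\subseteq\inte Q$. Also $\inte Q\ne\varnothing$ follows from this. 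Finally I would scale: if $\bar x\ne\nullv$ set $u=\bar x/\|\bar x\|\in\Usfer\subseteq\Uball$; by positive homogeneity of $\SVMAO$ (Lemma~\ref{lem:SVMAOprop}(iv)) and of $\inte Q$ (a cone), $\SVMAO(u)=\|\bar x\|^{-1}\SVMAO(\bar x)\subseteq\inte Q$ still holds, so $(\SCQ)$ is verified with this $u$. (If $\bar x=\nullv$ were the only interior point one would instead translate within the interior to a nonzero point, which is possible since the interior is open and nonempty.)

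With $(i)$ and $(\SCQ)$ of Proposition~\ref{pro:ebpolySCQ} established, and with $\Solv\ne\varnothing$ by hypothesis, that proposition yields directly the existence of $\tau>0$ with $\dist{x}{\Solv}\le\tau\pmfun(x)$ for all $x\in\R^n$, which is \eqref{in:ebpolypro}. I expect the main obstacle to be the passage from the covering property $(ii)$ to the ``interior-to-interior'' conclusion $\SVMAO(\bar x)\subseteq\inte Q$ with a uniform margin over $\omega\in\Omega$: one must be careful that the uniform lower bound $\inf_\omega\sur{A_\omega}>0$ (not merely $\sur{A_\omega}>0$ pointwise) is what delivers a single ball radius working for all $\omega$ simultaneously, and that polyhedrality of $\mathcal U$ lets us reduce ``for all $\omega$'' to the finitely many extreme matrices so that the convexity argument closing $\SVMAO(\bar x)\subseteq\inte Q$ goes through cleanly. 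A secondary point to handle with care is the reduction $\SVMAO^{+1}(Q)=\bigcap_j A_{\omega_j}^{-1}(Q)$, which uses both that $\SVMAO(x)=\conv\{A_\omega x:\omega\in\Omega\}$ (Lemma~\ref{lem:SVMAOprop}(ii) and the affine/convex structure of $\mathcal U$) and that $Q$ is convex.
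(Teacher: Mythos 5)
Your proposal is correct and follows essentially the same route as the paper's proof: it reduces to Proposition~\ref{pro:ebpolySCQ} by writing $\SVMAO^{+1}(Q)=\bigcap_{i=1}^{k}A_i^{-1}(Q)$ over the finitely many extreme matrices of the polytope $\mathcal{U}$ (hence polyhedral), and verifies $(\SCQ)$ by applying the uniform covering bound from (ii) to a ball around an interior point provided by (iii), so that each $A_\omega \bar x$ sits in $Q$ with a uniform margin, then scales the point into $\Uball$. The only cosmetic difference is the degenerate case $\bar x=\nullv$: the paper excludes it via pointedness of $Q$ together with the cone structure of $\SVMAO^{+1}(Q)$, whereas you simply select a nonzero point of the nonempty open interior and use positive homogeneity of $\SVMAO$, which works equally well.
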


\begin{proof}
According to hypothesis (i), as a polytope, by the Krein-Milman theorem
$\mathcal{U}$ can be expressed as
$$
   \mathcal{U}=\clco\{A_1,\dots,A_k\},
$$
where $k\in\N\backslash\{0\}$, by a proper choice of $A_i\in\mathcal{U}$.
Therefore, one readily sees that
\begin{equation}     \label{eq:interi:kAi}
  \SVMAO^{+1}(Q)=\bigcap_{i=1}^k A_i^{-1}(Q).
\end{equation}
Notice that, since $Q$ is polyhedral and hence each set $A_i^{-1}(Q)$ is polyhedral as well,
for $i=1,\dots,k$, so is the intersection in (\ref{eq:interi:kAi}).
This fact makes the hypothesis (i) of Proposition \ref{pro:ebpolySCQ}
satisfied.

By hypothesis (iii), there exist $u\in\R^n$ and $\epsilon>0$ such that
$$
  u+\epsilon\Uball\subseteq\bigcap_{\omega\in\Omega}A_\omega^{-1}(Q).
$$
Since $Q$ is pointed, it is possible to assume that $u\ne\nullv$.
Indeed, otherwise, if it is $\epsilon\Uball\subseteq\bigcap_{\omega\in\Omega}
A_\omega^{-1}(Q)$, then for some $v\in\R^n\backslash\{\nullv\}$
it must be
$$
  A_\omega v\in Q \qquad\hbox{ and }\qquad A_\omega v\in -Q,
  \qquad\forall\omega\in\Omega,
$$
whence $A_\omega v=\nullv\in Q$, for every $\omega\in\Omega$.
It follows that $\nullv\ne v\in\SVMAO^{+1}(Q)$ and
$$
 v+\epsilon\Uball\subseteq\SVMAO^{+1}(Q)+\SVMAO^{+1}(Q)=\SVMAO^{+1}(Q).
$$
Furthermore, since $\SVMAO^{+1}(Q)$ is a cone, it is possible to assume
that $u\in\Uball$.
According to hypothesis (ii), if $\sigma\in (0,\sur{\SVMAO})$, by linearity
of any $A_\omega$ one has
$$
  A_\omega(\epsilon\Uball)\supseteq \sigma\epsilon\Uball,\quad\forall
  \omega\in\Omega,
$$
which implies
$$
   A_\omega u+\sigma\epsilon\Uball\subseteq A_\omega(u+\epsilon\Uball)
   \subseteq Q,\quad\forall\omega\in\Omega.
$$
The last inclusion says that
$$
   \SVMAO(u)+\sigma\epsilon\Uball\subseteq Q,
$$
meaning that the qualification condition $(\SCQ)$ is satisfied.
All its hypotheses being satisfied, Proposition \ref{pro:ebpolySCQ}
can be invoked to achieve the assertion in the thesis.
\hfill $\square$
\end{proof}

\begin{example}
If considering the problem in Example \ref{ex:RSFPthm1ok}, which falls in
the polyhedral case, it happens that,
while as seen Theorem \ref{thm:psufcond} does apply, Theorem \ref{thm:ebpolysufcond} can
not be applied, because hypothesis (ii) fails to be satisfied. Indeed,
it is readily seen that $\sur{\Amap(1/2)}=0$ and, consequently, $\sur{\SVMAO}=0$.
This reveals that the conditions proposed in Theorem \ref{thm:ebpolysufcond}
are only sufficient for the validity of error bounds.
It is also worth observing that, for the problem at the issue, it holds
$$
  \inte\left(\bigcap_{\omega\in [0,1]}A_\omega^{-1}(\R^2_-)\right)=
  \inte\left(A_0^{-1}(\R^2_-)\cap A_1^{-1}(\R^2_-)\right)=
  \inte\R^2_-\ne\varnothing,
$$
so hypothesis (iii) of Theorem \ref{thm:ebpolysufcond} turns out to be
satisfied.
\end{example}

\begin{example}
Let $n=m=2$ and let a $(\RSFP)$ be defined by the data
$\Omega=[0,1]\subseteq\R$, $C=\R^2_+$, $Q=\R^2_-$, with $\Amap:[0,1]
\longrightarrow{\bf M}_{2\times 2}(\R)$ being given by
$$
  \Amap(\omega)=\left(\begin{array}{cc}
                        1\quad & -\omega \\
                        0\quad & 1 \\
                      \end{array}    \right)
                      =\omega A_0+(1-\omega)A_1,
                      \quad \omega\in [0,1].
$$
where
$$
   A_0=\left(\begin{array}{rr}
                        1\quad & -1 \\
                        0\quad & 1 \\
                      \end{array}  \right) \qquad \hbox{ and }\qquad
                    A_1=\left(\begin{array}{rr}
                        1\quad & 0 \\
                        0\quad & 1 \\
                      \end{array}  \right).
$$
In other words, the Krein-Milman representation of $\mathcal{U}$
is given by $\mathcal{U}=\clco\{A_0,\, A_1\}$.
Thus, the introduced $(\RSFP)$ falls in the polyhedral case,
thereby fulfilling hypothesis (i) of Theorem \ref{thm:ebpolysufcond}.
Since it is
$$
  A_0^{-1}(\R^2_-)=\{x\in\R^2\ :\ x_1\le x_2\le 0\}
  \qquad\hbox{and}\qquad
   A_1^{-1}(\R^2_-)=\R^2_-,
$$
it results in
$$
   \SVMAO^{+1}(\R^2_-)=\{x\in\R^2\ :\ x_1\le x_2\le 0\}.
$$
Consequently, one finds
$$
  \Solv=\R^2_+\cap\SVMAO^{+1}(\R^2_-)=\{\nullv\}.
$$
One readily sees that assumptions $(a_0^+)-(a_2)$ are satisfied by
the problem at hand. Since it is
\begin{eqnarray*}
  \inte\left(\bigcap_{\omega\in [0,1]}A_\omega^{-1}(\R^2_-)\right) &=&
  \inte\left(A_0^{-1}(\R^2_-)\cap A_1^{-1}(\R^2_-)\right)=
  \{x\in\R^2\ :\ x_1< x_2< 0\}   \\
  &\ne& \varnothing,
\end{eqnarray*}
hypothesis (iii) of Theorem is \ref{thm:ebpolysufcond}
fulfilled.
As for hypothesis (ii), observe that, according to (\ref{eq:surcharcov}),
it is
\begin{eqnarray*}
  \sur{A_\omega} &=& \min_{u\in\Usfer}\|A_\omega^\top\|=
  \min_{u=(u_1,u_2)\in\Usfer}\ \left\|\binom{u_1}{-\omega u_1+u_2}\right\| \\
   &=&\min_{\theta\in [0,2\pi]} \sqrt{\cos^2\theta+(-\omega\cos\theta+\sin\theta)^2}  \\
   &=& \min_{\theta\in [0,2\pi]}  \sqrt{1+\omega^2\cos^2\theta-2\omega\sin\theta\cos\theta}.
\end{eqnarray*}
By minimizing the continuous function $(\omega,\theta)\mapsto\psi(\omega,\theta)$,
defined by
$$
  \psi(\omega,\theta)=\omega^2\cos^2\theta-2\omega\sin\theta\cos\theta,
$$
over the compact box $[0,1]\times[0,2\pi]$, one finds
$$
  \min_{(\omega,\theta)\in[0,1]\times[0,2\pi]}
  \psi(\omega,\theta)=\psi(1,\theta_*)\ge -\frac{3}{4},
$$
where $\theta_*\in [\pi/4,\pi/3]$. It follows
$$
  \sur{\SVMAO}=\inf_{\omega\in [0,1]}\sur{A_\omega}\ge
  \sqrt{1+\psi(1,\theta_*)}\ge\frac{1}{2}>0,
$$
thereby showing that hypothesis (ii) is satisfied. Thus, Theorem
\ref{thm:ebpolysufcond} can be applied to the present $(\RSFP)$.
Let us check the validity of the global error bounds prescribed by
this theorem.
It is clear that
$$
  \dist{x}{\Solv}=\dist{x}{\nullv}=\|x\|,\quad\forall
  x\in\R^2.
$$
On the other hand, one has
$$
   \dist{x}{\R^2_-} =\left\{\begin{array}{ll}
    \|x\|, & \quad\hbox{ if } x\in\R^2_-, \\  \\
    0,  & \quad\hbox{ if } x\in\R^2_+,\\  \\
    -\min\{x_1,\, x_2\} & \quad\hbox{ if } x_1<0,\ x_2>0
     \ \hbox{ or }\    x_1>0,\ x_2<0,
  \end{array}  \right.
$$
and
$$
   \dist{x}{\SVMAO^{+1}(\R^2_-)} =\left\{\begin{array}{ll}
    \|x\|, & \quad\hbox{ if } x\in\R^2_+, \hbox{ or } x_1>0,\ x_2\ge -x_1  \\  \\
    x_2,  & \quad\hbox{ if } x_1<0,\ x_2>0,  \\  \\
    \displaystyle\frac{|x_1-x_2|}{\sqrt{2}}, & \quad\hbox{ if } x_2<|x_1|,   \\   \\
    0,  &  \quad\hbox{ if } x_1\le x_2\le 0.
  \end{array}  \right.
$$
Consequently, by summing up the above functions one obtains
$$
  \dist{x}{\R^2_-}+\dist{x}{\SVMAO^{+1}(\R^2_-)} =\left\{\begin{array}{lll}
     \|x\|, & \hbox{ if } & x\in\R^2_+,   \\   \\
     -\min\{x_1,\, x_2\}+\|x\|  & \hbox{ if }  & x_2<0, \\
                                            &  & x_2\ge -x_1,  \\  \\
     -\min\{x_1,\, x_2\}+\displaystyle\frac{|x_1-x_2|}{\sqrt{2}}, & \hbox{ if } & x_1\ge 0,  \\
                                                                          & &\ x_2\le -x_1,  \\   \\
     \|x\|+\displaystyle\frac{|x_1-x_2|}{\sqrt{2}}, & \hbox{ if } & x_1\le 0,  \\
                                                                     & &\ x_2\le x_1,  \\   \\
     \|x\|,  & \hbox{ if } & x_1\le x_2\le 0,    \\   \\
     -\min\{x_1,\, x_2\}+x_2,  & \hbox{ if }  &  x_1<0,  \\
                                                                          & &\ x_2>0.  \\   \\
  \end{array}  \right.
$$
Since it holds
$$
   \|x\|\le\sqrt{2}\max\{|x_1|,\, |x_2|\},\quad\forall x\in\R^2,
$$
it is possible to conclude that, for the problem under discussion, the global
error bound estimate in (\ref{in:ebpolypro}) holds true with $\tau\ge\sqrt{2}$.
\end{example}

\vskip1cm


\section{Conclusions}

The contents of this paper describe a methodology for dealing
with split feasibility problems, when some of their data happen
to be affected by uncertainty. This methodology follows a robust
approach to convex optimization problems under uncertainty.
Evidences are brought of the fact that the recently developed
set-valued inclusion theory offers a comfortable framework,
where to analyse solution existence and error bound issues by
techniques of convex and variational analysis.
The condition established for solvability of the robust counterpart
of a split feasibility problem and for global error bounds for the
associated solution set are expressed in terms of metric projections
and classic constructions from convex analysis, on the basis of the problem
data.
The fact that the main result is only a sufficient condition for error bounds
leaves space for further investigations on the issue.
Another direction for future research work deals with the case of $(\RSFP)$,
for which also the data $C$ and $Q$ are affected by uncertainty.
A different development of the proposed analysis should address a
more general class of robust split feasibility problems, where
the assumption of boundedness on $\mathcal{U}$ in $(a_2)$ is removed.
Instead, a complete rethinking of the approach should be required if assumption
$(a_1)$ is dropped out, leading to nonlinear split feasibility problems.
Nonetheless, even in such a more general setting, the author is
confident that an approach via set-valued inclusions could afford
useful insights into the question.

\vskip1cm



%
%



\end{document}